\theoremstyle{plain}
\newtheorem{theorem}{Theorem}[section]
\newtheorem{lemma}[theorem]{Lemma}
\newtheorem{corollary}[theorem]{Corollary}
\newtheorem*{theorem*}{Theorem}
\newtheorem*{mproblem}{Main Problem}
\newtheorem*{ghrproblem}{The Gr\"unbaum--Hadwiger--Ramos mass partition problem}
\newtheorem*{r-conjecture}{The Ramos conjecture}
\newtheorem*{claim*}{Claim}
\theoremstyle{definition}
\newcommand{\R}{\mathbb{R}}
\newcommand{\Z}{\mathbb{Z}}
\newcommand{\F}{\mathbb{F}}
\newcommand{\B}{\mathrm{B}}
\newcommand{\E}{\mathrm{E}}
\newcommand{\pt}{\mathrm{pt}}
\newcommand{\CC}{\mathcal{C}}
\newcommand{\M}{\mathcal{M}}
\newcommand{\HH}{\mathcal{H}}
\newcommand{\II}{\mathcal{I}}
\newcommand\Sym{\mathfrak{S}}
\newcommand\Sympm{\mathfrak{S}^{\pm}}
\newcommand{\im}{\operatorname{im}}
\newcommand{\id}{\operatorname{id}}
\newcommand{\ind}{\operatorname{Index}}
\newcommand{\Gr}[2]{ G_{#1}( \mathbb{R}^{#2}) }
\newcommand{\Index}[4]{\ind_{#3}^{#2}(#1;#4) }
\newcommand*\bigC[1]{ \big(  #1 \big) }
\title[Topology of the GHR-problem for mass assignments]{Topology of the Gr\"unbaum--Hadwiger--Ramos problem for mass assignments}
\author[Blagojevi\'c]{Pavle V. M. Blagojevi\'{c}} 
\thanks{The research by Pavle V. M. Blagojevi\'{c} leading to these results has
        received funding from the Serbian Ministry of Education and Science.}
\address{Inst. Math., FU Berlin, Arnimallee 2, 14195 Berlin, Germany\hfill\break
\mbox{\hspace{4mm}}Mat. Institut SANU, Knez Mihailova 36, 11001 Beograd, Serbia}
\email{blagojevic@math.fu-berlin.de} 
\author[Calles]{Jaime Calles Loperena}
\thanks{The research by Jaime Calles Loperena leading to these results has received funding from CONACYT doctoral scholarship, CONACYT project grant CB 217392, and PAPIIT grants IA100119 and IN100221.}
\address{Centro de Ciencias Matem\'aticas, UNAM Campus Morelia, Morelia, Mexico}
\email{jcalles@matmor.unam.mx}
\author[Crabb]{Michael C. Crabb} 
\address{Institute of Mathematics, University of Aberdeen, Aberdeen AB24 3UE, UK}
\email{m.crabb@abdn.ac.uk}
\author[Dimitrijevi\'{c} Blagojevi\'{c}]{Aleksandra S. Dimitrijevi\'{c} Blagojevi\'{c}} 
\thanks{}
\address{Beograd, Serbia}
\email{aleksandra1973@gmail.com}
\begin{document}

\date{}

\maketitle

\begin{abstract}

In this paper, motivated by recent work of Schnider and  Axelrod-Freed \& Sober\'on, we study an extension of the classical Gr\"unbaum--Hadwiger--Ramos mass partition problem to the  mass assignments.
Using the Fadell--Husseini index theory we prove that for a given family of $j$ mass assignments $\mu_1,\dots,\mu_j$ on the Grassmann manifold $G_{\ell}(\R^d)$ and the given integer $k\geq 1$ there exist a linear subspace $L\in G_{\ell}(\R^d)$ and $k$ affine hyperplanes in $L$ equiparts the masses $\mu_1^L,\dots,\mu_j^L$ assigned to the subspace $L$, provided that $d\geq j + (2^{k-1}-1)2^{\lfloor\log_2j\rfloor}$. 

\end{abstract}

\section{Introduction and statement of main results}
\label{sec : Introduction and statement of main results}

In this paper, motivated by the recent work of Patrick Schnider \cite{Schnider-2019} and Ilani Axelrod-Freed \& Pablo Sober\'on \cite{Soberon},  we consider an extension of the classical Gr\"unba\-um--Hadwiger--Ramos mass partition problem to the mass assignments.
First we set up the terminology to be used in the paper.

\medskip
A {\em mass} is assumed to be a finite Borel measure on a Euclidean space which vanishes on each affine hyperplane.  
For example, masses in $\R^d$ are measures given by the $d$-dimensional volume of a proper convex body, induced by lengths of intervals on a moment curve in $\R^d$, and measures given by a finite collection of pairwise disjoint  balls.

\medskip
Let $X$ be a locally compact Hausdorff space, and let $M_+(X)$ denote the set of all finite Borel  measures on $X$.
For a definition of the Borel measure on a topological space consult for example \cite[Def.\, 2.15]{Rudin-1987}.
The {\em weak topology} on $M_+(X)$ is defined to be the minimal topology such that for every bounded and upper semi-continuous function $f \colon X \longrightarrow \R$, the induced function $M_+(X)\longrightarrow\R$, $\nu \longmapsto \int f d\nu$, is upper semi-continuous.
Here minimality is considered with respect to the inclusion of families of (open) subsets of $X$. 
In the case when $X=\R^{\ell}$ we denote by $M_+'(\R^{\ell})\subseteq M_+(\R^{\ell})$ the subspace of all masses on $\R^{\ell}$.
For more details about spaces of measures and related notions consult \cite{Topsoe1970}.

\medskip
Let $\Gr{\ell}{d}$, $0\leq\ell\leq d$, denote the Grassmann manifold of all $\ell$-dimensional linear subspaces of $\R^d$.
Consider the following fibre bundle
\begin{equation}
\label{eq: mass_assignment}	
\xymatrix{
M_+'(\R^{\ell})\ar[r] & \mathcal{M}_+'(\ell,d)\ar[r]^-{\rho} & \Gr{\ell}{d}
}
\end{equation}
where the total space is given by
\[
\mathcal{M}_+'(\ell,d):= \{ (L, \nu) \mid L \in \Gr{\ell}{d} \; \text{and} \; \nu \in M_+'(L) \}
\]
and the map $\rho$ by $( L, \nu )\longmapsto L$.
A {\em mass assignment} $\mu$
on $\Gr{\ell}{d}$ is a cross-section of the  fibre bundle \eqref{eq: mass_assignment}, which assigns to each subspace $L \in \Gr{\ell}{d}$ a mass $\mu^{L}$ on $L$. 
Mass assignments on $\Gr{\ell}{d}$ are for example, projections of masses in $\R^d$ to $\ell$-dimensional linear subspaces and volumes of intersections of proper convex body in $\R^d$ with $\ell$-dimensional linear subspaces.

\medskip
We use a definition of the mass assignment on a Grassmann manifold as a cross section of the fibre bundle \eqref{eq: mass_assignment}, which, while being equivalent, differs from the treatment of this notion in the work of Schnider \cite[p.\,1193]{Schnider-2019} and Axelrod-Freed \& Sober\'on \cite[Def.\,1]{Soberon}.
Schnider defines a mass assignment on $\Gr{\ell}{d}$ as a continuous function $\Gr{\ell}{d}\longrightarrow M_{\ell}$ where $M_{\ell}$ is the space of all $\ell$-dimensional mass distributions equipped with the weak topology.
On the other hand Axelrod-Freed \& Sober\'on consider the affine Grassmannian $A_{\ell}(\R^d)$ of all $\ell$-dimensional affine subspaces of $\R^d$ --- the total space of the tautological vector bundle $\gamma_{d-\ell}^d$ over the Grassmann manifold $\Gr{d-\ell}{d}$.
In addition, for every $L\in A_{\ell}(\R^d)$ they equip the set $M_{\ell}(L)$, of all finite  measures on $V$ absolutely continuous with respect to the Lebesgue measure on $L$, with the weak topology.
Then an $\ell$-dimensional mass assignment is a cross section of the fibre bundle
\[
\xymatrix{
M_{\ell}(L)\ar[r] & \{ (L,\nu) : L\in A_{\ell}(\R^d)  \; \text{and} \; \nu \in M_{\ell}(L) \}\ar[r] & A_{\ell}(\R^{d}).
}
\]
All the definitions, after appropriate adjustments, are equivalent and more importantly induce identical topological questions.

\medskip
Let $v\in S^{d-1}$ be a unit vector in $\R^d$, and let $a\in\R$.
The {\em oriented affine hyperplane} $H(v;a)$ in $\R^d$, oriented by $v$ on the distance $a$ from the origin (in direction $v$), determines the associated affine hyperplane 
\[
H_{v;a}:=\{x\in\R^d \colon \langle x,v\rangle =a\}, 
\] 
and, in addition, two closed half-spaces which are denoted by  
\[
H_{v;a}^{0}:=\{x\in \R^d \colon\langle x,v\rangle \geq a\}\qquad\text{and}\qquad
H_{v;a}^{1}:=\{x\in\R^d \colon \langle x,v\rangle \leq a\}.
\]
In other words, the oriented affine hyperplane $H(v;a)$ can be identified with the triple $(H_{v;a},H_{v;a}^{0},H_{v;a}^{1})$.
The following equalities on the level of sets hold: $H_{v;a}=H_{-v;-a}$, $H_{v;a}^{0}=H_{-v;-a}^{1}$ and $H_{v;a}^{1}=H_{-v;-a}^{0}$.

\medskip
A \emph{$k$-arrangement} $\HH$ in $\R^d$ is an ordered collection of $k$ oriented affine hyperplanes $\HH=\big( H(v_1;a_1), \ldots, H(v_k;a_k) \big)$.
The \emph{orthant} determined by the $k$-arrangement $\HH$ and the element $\alpha=(\alpha_1,\ldots,\alpha_k)\in \Z_2^k=\{0,1\}^k$ of the abelian group $\Z_2^k$ is the following intersection of closed halfspaces
\[
\mathcal{O}_{\alpha}^{\HH}=H_{v_1;a_1}^{\alpha_{1}}\cap\cdots\cap H_{v_k;a_k}^{\alpha_{k}}.
\]

\medskip
A $k$-arrangement $\HH$ \emph{equiparts} a collection of masses $\M=(\mu_1,\ldots,\mu_j)$ if for every element $\alpha \in \Z_2^k$ and every $r\in\{1,\ldots,j\}$ we have that:
\[
\mu_{r} (\mathcal{O}_{\alpha}^{\HH})=\tfrac{1}{2^{k}}\mu_{r}(\R^d).
\]
This can be achieved only in the case when $k\le d$.
For that reason it is always silently assumed that the number of hyperplanes we consider does not exceed the dimension of the ambient space.
 
\subsection{The Gr\"unbaum--Hadwiger--Ramos problem for masses}
\label{subsec : The Grunbaum--Hadwiger--Ramos problem for masses}
The study of mass partition problems by affine hyperplanes started with a classical result, the so called ham sandwich theorem, conjectured by Hugo Steinhaus \cite[Problem 123]{Mauldin-1981}, and proved by Karol Borsuk in 1938; for details about the history see \cite{Beyer-Zardecki-2004}.
The ham sandwich theorem states that for any collection of $d$ masses living in a $d$-dimensional Euclidean space there exists an affine hyperplane which equiparts the collection that cuts each of the masses into two equal parts.

\medskip
A few decades later Branko Gr\"unbaum in his paper \cite{Grunbaum-1960} asked the following question: Is it possible to equipart a single mass in $\R^d$ by a $d$-arrangement?
He noted that, while the answer in the case of a line is obviously positive, the positive answer for the case of the plane follows directly from the ham sandwich theorem.
The positive answer to the Gr\"unbaum's question in the case $d=3$ was given by Hugo Hadwiger \cite{Hadwiger-1966} in 1966 as a consequence of his result: For any collection of two masses in $\R^3$ there exists a $2$-arrangement which equiparts the collection. 
In 1984 David Avis \cite{Avis-1984} showed that in every dimension $d\geq 5$ there is a mass which cannot be equiparted by a $d$-arrangement.
The case of dimension $4$, to this very day, is still open, meaning that we would like to know: Is it possible to equipart one mass in $\R^4$ with a $4$-arrangement?

\medskip
In 1996 Edgar Ramos \cite{Ramos-1996} proposed the following extension of the  Gr\"unbaum hyperplane mass partition problem.
  
\begin{ghrproblem}
\label{GruenbaumProblem}
Determine the minimal dimension $d=\Delta (j,k)$ of a Euclidean space $\R^d$ such that for every collection of $j$ masses in $\R^d$ there exists a $k$-arrangement equiparting the collection of masses.
\end{ghrproblem}

In particular, the ham sandwich theorem is equivalent to the equality $\Delta(d,1)=d$, while the results of  Gr\"unbaum and Hadwiger imply that $\Delta(1,2)=2$, $\Delta(2,2)=3$ and $\Delta(1,3)=3$.
Based on the ideas of Avis, Ramos derived the following lower bound for the function  $\Delta (j,k)$:\[
\tfrac{2^k-1}{k}j\leq \Delta(j,k).
\]
The lower bound transformed into the following conjecture.

\begin{r-conjecture}  
$\Delta(j,k)=\lceil \tfrac{2^k-1}{k}j\rceil$ for every $j\geq 1$ and $k\geq 1$.
\end{r-conjecture}

An upper bound for the function $\Delta(j,k)$ was obtained in 2006 by Peter Mani-Levitska, Sini\v{s}a Vre\'{c}ica \& Rade \v{Z}ivaljevi\'c in \cite[Thm.\,39]{ManiLevitska-Vrecica-Zivaljevic-2006}:
\[
      \Delta(j,k)\ \le\ j + (2^{k-1}-1)2^{\lfloor\log_2j\rfloor}.
\]
The only instance in which lower and upper bounds coincide is in the case when $k=2$ and $j=2^{t+1}-1\geq 1$.

\medskip
Over the years, using a variety of methods from equivariant algebraic topology, different groups of authors studied the conjecture of Ramos.
Despite considerable effort the conjecture has been confirmed rigorously only in a few special cases; for more details on the history and a discussion of solution methods consult a critical review \cite{Blagojevic-Frick-Haase-Ziegler-2018}, and for the currently best known results see \cite{Blagojevic-Frick-Haase-Ziegler-2016}.

\subsection{The Gr\"unbaum--Hadwiger--Ramos problem for mass assignments}
\label{subsec : The Grunbaum--Hadwiger--Ramos problem for mass assignments}

The problem we consider in this paper is the following extension of the  Gr\"unbaum--Hadwiger--Ramos problem to mass assignments.

\medskip
A $4$-tuple of natural numbers $(d,\ell,k,j)$ where $1\leq\ell\leq d$ is called {\em mass  assignment admissible} if for every collection of $j$ mass assignments $\M=( \mu_1, \ldots , \mu_j )$ on the Grassmann manifold $\Gr{\ell}{d}$ there exist a vector subspace $L \in  \Gr{\ell}{d}$ and a $k$-arrangement $\HH^L$ in $L$ which equiparts the collection of $j$ masses $(\mu_1^L, \ldots , \mu_j^L)$.
Observe that $(d,\ell,k,j)$ is mass assignment admissible only when $k\le \ell$.
The case $\ell=d$ coincides with the classical  Gr\"unbaum--Hadwiger--Ramos problem.
  
\begin{mproblem}
\label{GruenbaumProblemMassAssignments}
Determine all mass assignment admissible $4$-tuples.
\end{mproblem}

The reason for considering an extension of the classical  Gr\"unbaum--Hadwiger--Ramos problem lies in the fact that now instead of one collection of masses we have a family of collections of masses parametrised by a topological space.
Hence it reasonable to expect that the topology of the parameter space will come into play essentially allowing us to equipart more masses than we could in the classical case.

\medskip
Schnider, in his recent publication \cite[Thm.\,2]{Schnider-2019}, showed that any $4$-tuple of the form $( d, \ell , 1 , d )$ where $ 1 \leq \ell \leq d $ is  mass  assignment admissible.
In other words, he showed that for every collection of $d$ mass assignment $(\mu_1,\dots,\mu_d)$ on $\Gr{\ell}{d}$ there exists an $\ell$-dimensional linear subspace $L\subseteq\R^d$ and an affine hyperplane in $L$ which equiparts all $d\geq\ell\geq 1$  masses $(\mu_1^L,\dots,\mu_d^L)$.
Thus, in the case of the Gr\"unbaum--Hadwiger--Ramos problem for mass assignments we can expect to partition more masses compared to the classical case.

\medskip
In this paper, we prove the following general algebraic criterion for the assignment admissibility which also yields multiple corollaries.
For the statement of the theorem we introduce the following truncated polynomial ring
\begin{equation}
\label{eq: definition of R}
R_{d,\ell,k}:=\F_2[x_1,\dots,x_k, w_1, \dots ,w_{\ell}, \overline{w}_1, \dots , \overline{w}_{d-\ell}]/I_{d,\ell}	
\end{equation}
where $\deg(x_1)=\dots=\deg(x_k)=1$, $\deg( w_s) = s$, $\deg( \overline{w}_r) = r$ for $1\leq s\leq \ell$, $1\leq r\leq d-\ell$, and $I_{d,\ell}$ is the ideal generated by the following $d$ polynomials
\[
\sum_{s=\max\{0,r+\ell-d\}}^{\min\{r,\ell\}}w_s\cdot \overline{w}_{r-s}, \qquad 1\leq r\leq d,
\]
in variables $w_1, \dots ,w_{\ell}, \overline{w}_1, \dots , \overline{w}_{d-\ell}$.
Note that the ring $R_{d,\ell,k}$ is isomorphic with the cohomology ring $H^*(\B(\Z_2^k)\times \Gr{\ell}{d};\F_2)$, and it can also be seen as a polynomial ring over the ring $H^*(\Gr{\ell}{d};\F_2)$, that is
\[
R_{d,\ell,k}\cong (\F_2[w_1, \dots ,w_{\ell}, \overline{w}_1, \dots , \overline{w}_{d-\ell}]/I_{d,\ell})[x_1,\dots,x_k].
\]
Indeed, the K\"unneth formula for field coefficients \cite[Thm.\,VI.3.2]{Bredon-1997}, in combination with the description of the cohomology of $\B(\Z_2^k)$ in \cite[Thm.\,II.4.4]{Adem-Milgram-2004}, and the cohomology of $\Gr{\ell}{d}$  in \cite[p. 190]{Borel-1953}, imply that
\begin{align*}
H^*(\B(\Z_2^k)\times \Gr{\ell}{d};\F_2)&\cong H^*(\B(\Z_2^k);\F_2)\otimes H^*(\Gr{\ell}{d};\F_2)\\
&\cong	\F_2[x_1,\dots,x_k]\otimes \F_2[w_1, \dots ,w_{\ell}, \overline{w}_1, \dots , \overline{w}_{d-\ell}]/I_{d,\ell}.
\end{align*}

\medskip
The main result of the paper is the following theorem which we prove in Section \ref{Proof of Main Theorem} based on the configuration test map scheme developed in Section \ref{sec : From the Grunbaum--Hadwiger--Ramos problem for mass assignments to a parametrized Boursuk--Ulam type question} and the computations done in Section \ref{sec : The Fadell--Husseini index of the configuration and test space}.

\begin{theorem}
\label{theorem : Main}	
Let $d\geq 2$, $k\geq 1$, $j\geq 1$ and $\ell\geq 1$ be integers.
A $4$-tuple of natural numbers $(d,\ell,k,j)$ where $1\leq\ell\leq d-1$ is mass assignment admissible if the element  
\begin{align*}
e_{k,j} &:=
\prod_{i=1}^{k} x_i^{j-1} \cdot \prod_{(\alpha_{1},\dots, \alpha_{k})\in\F_2^k{\setminus}\{ (0,\ldots,0), (1,\ldots,0), \ldots, (0,\ldots,1) \}} (\alpha_{1}x_1 + \cdots + \alpha_{k}x_k)^{j} \\
& =\frac{1}{x_1\cdots x_k}\prod_{(\alpha_{1},\dots, \alpha_{k})\in\F_2^k{\setminus}\{ (0,\ldots,0) \}} (\alpha_{1}x_1 + \cdots + \alpha_{k}x_k)^{j}
\end{align*}
of the ring $R_{d,\ell,k}$ is not contained in the ideal
\[
\II_{d,\ell,k}:=\Big\langle \sum_{s = 0}^{\ell}  x_{r}^{s} \, w_{\ell-s} 	\: : \:	1 \leq r \leq k \Big\rangle.
\]
\end{theorem}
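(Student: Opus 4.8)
The plan is to follow the standard configuration–test–map (CTM) paradigm together with Fadell--Husseini index theory, using the reference frame that the subsequent sections of the paper set up. First I would build the configuration space: since we must choose a subspace $L\in\Gr{\ell}{d}$ and $k$ oriented affine hyperplanes inside $L$, a hyperplane in $L$ is given by an element of $S(L\oplus\R)$ (a unit vector $v$ together with an offset $a$, after the usual one-point parametrisation), so the natural configuration space is the total space $X$ of the bundle over $\Gr{\ell}{d}$ whose fibre over $L$ is $(S(L\oplus\R))^{k}\cong(S^{\ell})^{k}$. The group acting is the Weyl-type group $W_k:=(\Z_2)^k\rtimes\ldots$; in fact for the equipartition problem with $k$ hyperplanes one uses $G=\Z_2^k$ acting on $(S^\ell)^k$ by the antipodal action on each factor (sign changes $v\mapsto -v$, $a\mapsto -a$ flipping the two halfspaces), and the labelling of orthants by $\Z_2^k$ is the regular representation. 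The test map sends a configuration $(L,H_1^L,\dots,H_k^L)$ to the vector of normalised orthant-measures $\big(\mu_r(\mathcal O_\alpha^{\HH^L})-\tfrac{1}{2^k}\mu_r(L)\big)_{\alpha,r}$, which lives in the $j$-fold sum of the reduced regular representation $\widetilde{\R[\Z_2^k]}$ of dimension $2^k-1$; equipartition for some $(L,\HH^L)$ is exactly the statement that this $\Z_2^k$-equivariant map has a zero.

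Next I would translate the existence of a zero into an index inequality. Assuming no equipartition exists for any $L$, the test map restricts to a $\Z_2^k$-equivariant map $X\to S\big(j\,\widetilde{\R[\Z_2^k]}\big)$, the unit sphere of the test representation, hence into the corresponding sphere bundle; by the standard monotonicity of the Fadell--Husseini index this forces
\[
\ind_{\Z_2^k}\big(X;\F_2\big)\ \supseteq\ \ind_{\Z_2^k}\big(S(j\,\widetilde{\R[\Z_2^k]});\F_2\big).
\]
Now $\ind_{\Z_2^k}(S(j\,\widetilde{\R[\Z_2^k]});\F_2)$ is a principal ideal in $H^*(B\Z_2^k;\F_2)=\F_2[x_1,\dots,x_k]$ generated by the $\Z_2^k$-Euler class of $j\,\widetilde{\R[\Z_2^k]}$, which is the product of the Euler classes of the nontrivial characters each raised to the $j$-th power, i.e. precisely $\prod_{\alpha\neq 0}(\alpha_1x_1+\cdots+\alpha_kx_k)^{j}=x_1\cdots x_k\cdot e_{k,j}$ — this is the classical computation behind the Mani-Levitska--Vre\'cica--\v{Z}ivaljevi\'c bound, so I would simply invoke it. On the other side, I must compute (or bound) $\ind_{\Z_2^k}(X;\F_2)$, where $X$ is fibred over $\Gr{\ell}{d}$ with fibre $(S^\ell)^k$. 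Here the Borel construction $E\Z_2^k\times_{\Z_2^k}(S^\ell)^k$ fibres over $B\Z_2^k$ as a product of $k$ copies of the $\ell$-sphere bundle associated to the sign representation, and globalising over the Grassmannian one gets that $H^*_{\Z_2^k}(X;\F_2)$ is a quotient of $R_{d,\ell,k}$ — the Leray--Hirsch/Borel description — in which the relation "$x_r$ acts on the $S^\ell$ in the $r$-th factor" produces exactly the Gysin relation $\sum_{s=0}^{\ell}x_r^{s}w_{\ell-s}=0$ for each $r$. In other words, the kernel of $R_{d,\ell,k}\to H^*_{\Z_2^k}(X;\F_2)$ contains $\II_{d,\ell,k}$, so $\ind_{\Z_2^k}(X;\F_2)\supseteq\II_{d,\ell,k}$; but crucially the Fadell--Husseini index of the sphere bundle $(S^\ell)^k\hookrightarrow X$ equals $\II_{d,\ell,k}$ on the nose, because each factor contributes precisely that Gysin ideal and the index of a product/fibrewise join is the sum of the indices.

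Combining the two computations: a non-existence hypothesis forces the Euler class $x_1\cdots x_k\cdot e_{k,j}$ to lie in $\ind_{\Z_2^k}(X;\F_2)=\II_{d,\ell,k}$, and then — after verifying that $x_1\cdots x_k$ is a non-zero-divisor modulo $\II_{d,\ell,k}$, so that membership of the product is equivalent to membership of $e_{k,j}$ — we conclude $e_{k,j}\in\II_{d,\ell,k}$ in $R_{d,\ell,k}$. Contrapositively, if $e_{k,j}\notin\II_{d,\ell,k}$ then an equipartition must exist, which is exactly the claim that $(d,\ell,k,j)$ is mass assignment admissible. I would structure the write-up as: (i) recall the CTM setup and the group $\Z_2^k$ from the earlier section; (ii) cite the section where $\ind_{\Z_2^k}(X;\F_2)$ is shown to equal $\II_{d,\ell,k}$ and $\ind_{\Z_2^k}(S(j\,\widetilde{\R[\Z_2^k]});\F_2)$ to be generated by $x_1\cdots x_k\,e_{k,j}$; (iii) apply index monotonicity; (iv) cancel $x_1\cdots x_k$. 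The main obstacle I expect is step (ii) for the configuration space: one has to be careful that the Fadell--Husseini index of $X$ is computed correctly — in particular that the Borel-equivariant cohomology of the sphere bundle over the Grassmannian really is the quotient $R_{d,\ell,k}/\II_{d,\ell,k}$ and that the ideal one extracts is exactly $\II_{d,\ell,k}$ rather than something strictly larger — and that the cancellation of $x_1\cdots x_k$ is legitimate in the truncated ring $R_{d,\ell,k}$; both are delicate because $R_{d,\ell,k}$ carries the nontrivial Grassmannian relations $I_{d,\ell}$, so multiplication by $x_1\cdots x_k$ being injective modulo $\II_{d,\ell,k}$ is not completely formal and may itself require the dimension hypothesis $1\le\ell\le d-1$.
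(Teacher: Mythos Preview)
Your overall architecture---CTM scheme, Fadell--Husseini index, monotonicity---is exactly the paper's. The difference is in the \emph{normalisation}, and it is not cosmetic: the paper does not work with your full configuration space with fibre $(S^{\ell})^{k}=S(L\oplus\R)^{k}$. Instead it first assumes (by approximation) that $\mu_{1}^{L}$ has compact connected support, so that for each direction in $L$ there is a \emph{unique} affine hyperplane orthogonal to it bisecting $\mu_{1}^{L}$. This replaces the fibre by $S(L)^{k}\cong(S^{\ell-1})^{k}$ and simultaneously shrinks the test representation from $U_{k}^{\oplus j}$ to $U_{k}'\oplus U_{k}^{\oplus (j-1)}$, dropping exactly one copy of $W_{1}=V_{e_{1}}\oplus\cdots\oplus V_{e_{k}}$. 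The Euler class of the reduced test bundle is then \emph{precisely} $e_{k,j}$, and Dold's computation applied to $S(\gamma_{\ell}^{d})$ gives the configuration index \emph{precisely} $\II_{d,\ell,k}$. Monotonicity then yields $e_{k,j}\in\II_{d,\ell,k}$ directly under the non-existence hypothesis; no cancellation of $x_{1}\cdots x_{k}$ is ever needed.

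By contrast, your setup has a concrete error and a genuine gap. For the rank-$(\ell+1)$ bundle $\gamma_{\ell}^{d}\oplus\underline{\R}$ the Dold/Gysin generator is $\sum_{s=0}^{\ell+1}x_{r}^{s}\,w_{\ell+1-s}(\gamma\oplus\underline{\R})=x_{r}\sum_{s=0}^{\ell}x_{r}^{s}\,w_{\ell-s}=x_{r}\beta_{r}$, not $\beta_{r}$; so with $(S^{\ell})^{k}$ fibres the configuration index is $\langle x_{1}\beta_{1},\dots,x_{k}\beta_{k}\rangle$, not $\II_{d,\ell,k}$. Monotonicity would then only give $x_{1}\cdots x_{k}\,e_{k,j}\in\langle x_{1}\beta_{1},\dots,x_{k}\beta_{k}\rangle$, and deducing $e_{k,j}\in\langle\beta_{1},\dots,\beta_{k}\rangle$ from this is exactly the non-zero-divisor step you flag as delicate---and it is: $R_{d,\ell,k}/\II_{d,\ell,k}$ is the cohomology of a finite-dimensional space, so every $x_{r}$ is nilpotent there, and the cancellation cannot be done formally. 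The paper's pre-bisection trick is what sidesteps this entirely.
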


\medskip
As the first consequence of Theorem \ref{theorem : Main} we recover the ham sandwich type result of Schnider  \cite[Thm.\,2]{Schnider-2019}.
For the proof see Section \ref{Proof of Corollary ham-sandwich}. 

\begin{corollary}
\label{cor : ham-sandwich}
Let $d\geq 2$ be an integer. 
Every $4$-tuple of the form $( d, \ell , 1 , d )$, where $ 1 \leq \ell \leq d $, is mass assignment admissible.
\end{corollary}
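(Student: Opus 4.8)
The plan is to deduce the corollary directly from Theorem~\ref{theorem : Main} with $k=1$ and $j=d$, treating separately the boundary value $\ell=d$. When $\ell=d$ the Grassmannian $\Gr{d}{d}$ is a point and a mass assignment on it is simply a mass in $\R^d$, so the admissibility of $(d,d,1,d)$ is exactly the ham sandwich theorem ($\Delta(d,1)=d$); this disposes of the case excluded by the hypothesis $\ell\le d-1$ of Theorem~\ref{theorem : Main}. For $1\le\ell\le d-1$ I would first simplify the algebraic criterion. For $k=1$ the index set $\F_2^{1}\smallsetminus\{(0),(1)\}$ is empty, so the second product defining $e_{k,j}$ is empty and $e_{1,d}=x_1^{d-1}$, while $\II_{d,\ell,1}=\big\langle x_1^{\ell}+w_1x_1^{\ell-1}+\cdots+w_{\ell-1}x_1+w_\ell\big\rangle$ (with the convention $w_0=1$). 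Thus the corollary reduces to the single claim $x_1^{d-1}\notin\II_{d,\ell,1}$.

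Next I would exploit the structure of the quotient $R_{d,\ell,1}/\II_{d,\ell,1}$. Viewing $R_{d,\ell,1}$ as $H^*(\Gr{\ell}{d};\F_2)[x_1]$, the single generator $g:=x_1^{\ell}+w_1x_1^{\ell-1}+\cdots+w_\ell$ is monic of degree $\ell$ in $x_1$, so the quotient is a free $H^*(\Gr{\ell}{d};\F_2)$-module with basis $1,x_1,\dots,x_1^{\ell-1}$ --- equivalently, by the projective bundle theorem, it is $H^*(\mathbb{P}(\gamma_\ell^d);\F_2)$ with $x_1$ corresponding to the first Stiefel--Whitney class of the tautological line bundle. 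In particular $x_1^{d-1}$ is nonzero in the quotient as soon as one of its coordinates in the basis $1,x_1,\dots,x_1^{\ell-1}$ is nonzero in $H^*(\Gr{\ell}{d};\F_2)$.

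It then remains to compute that coordinate. The relations $I_{d,\ell}$ encode the identity $(1+w_1+\cdots+w_\ell)(1+\overline w_1+\cdots+\overline w_{d-\ell})=1$ in $H^*(\Gr{\ell}{d};\F_2)$; multiplying the formal series $\sum_{N\ge 0}x_1^{N}t^{N}$ by $1+w_1t+\cdots+w_\ell t^{\ell}$ and applying $g(x_1)=0$ collapses it to a polynomial of degree $\le\ell-1$ in $t$, and multiplying the result by the inverse series $1+\overline w_1 t+\cdots+\overline w_{d-\ell}t^{d-\ell}$ and reading off the coefficient of $t^{d-1}$ should give $x_1^{d-1}=\overline w_{d-\ell}\,(x_1^{\ell-1}+w_1x_1^{\ell-2}+\cdots+w_{\ell-1})$ in $R_{d,\ell,1}/\II_{d,\ell,1}$. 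Hence the coordinate of $x_1^{d-1}$ along $x_1^{\ell-1}$ is $\overline w_{d-\ell}$, the top Stiefel--Whitney class of the complementary bundle $\gamma_{d-\ell}^{d}\cong(\gamma_\ell^{d})^{\perp}$, and this is nonzero in $H^*(\Gr{\ell}{d};\F_2)$, as one checks by pulling back along the inclusion $\RP{d-\ell}\cong\Gr{d-\ell}{d-\ell+1}\hookrightarrow\Gr{d-\ell}{d}$, under which it becomes $y^{d-\ell}\ne 0$. Therefore $x_1^{d-1}\notin\II_{d,\ell,1}$ and Theorem~\ref{theorem : Main} applies.

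The only step carrying real content is the last one --- pinning down the $x_1^{\ell-1}$-coordinate of $x_1^{d-1}$ in the truncated ring, i.e. reducing $x_1^{d-1}$ modulo $g$ using the Stiefel--Whitney inversion relations; everything else is bookkeeping. I do not expect a genuine obstacle, since this is the standard manipulation of inverting the total Stiefel--Whitney class, but it is precisely where the equality $j=d$ is used: already for $j=d+1$ one would be asking for $x_1^{d}$, which vanishes in the quotient (the series terminates at degree $d-1$ in $t$), consistently with $\Delta(d,1)=d$.
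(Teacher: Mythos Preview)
Your proof is correct and follows essentially the same route as the paper: both handle $\ell=d$ by the classical ham sandwich theorem, reduce the case $1\le\ell\le d-1$ via Theorem~\ref{theorem : Main} to showing $x_1^{d-1}\notin\langle g\rangle$, and compute the remainder of $x_1^{d-1}$ modulo $g$ to find the $x_1^{\ell-1}$-coefficient equal to $\overline w_{d-\ell}\neq 0$. The only difference is packaging: the paper quotes the division identity from \cite[Proof of Prop.~4.1]{Crabb2013}, whereas your generating-function/projective-bundle argument derives the same identity (and indeed the full remainder $\overline w_{d-\ell}(x_1^{\ell-1}+w_1x_1^{\ell-2}+\cdots+w_{\ell-1})$, which the paper records later in Lemma~\ref{lem : intersection of ideal and subring}) in a self-contained way.
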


\medskip
In order to state a consequence of Corollary \ref{cor : ham-sandwich} we introduce a special type of a mass assignment.
Let $s\colon \Gr{\ell}{d}\longrightarrow E(\gamma_{\ell}^d)$ be a section of the tautological vector bundle $\gamma_{\ell}^d$ over $\Gr{\ell}{d}$.
For a positive real number $\varepsilon>0$ the section $s$ defines a mass assignment $\mu_s$ given by $L\longmapsto B_L(s(L),\varepsilon)$.
Here $B_L(s(L),\varepsilon)$ denotes the Euclidean closed ball in $L$ with center at $s(L)$ and radius $\varepsilon$, or in other words the mass induced by this ball. 
Since any closed Euclidean ball is cut into halves of equal volume by an affine hyperplane if and only if this hyperplane passes through the centre of the ball, we get the following statement as a direct consequence of   Corollary \ref{cor : ham-sandwich}.

\begin{corollary}
\label{cor : ham-sandwich-1}
Let $d\geq 2$ and $1\leq \ell\leq d-1$ be integers.
For every collection of $d$ sections  $s_1,\dots,s_d\colon \Gr{\ell}{d}\longrightarrow E(\gamma_{\ell}^d)$ of the tautological vector bundle $\gamma_{\ell}^d$ over $\Gr{\ell}{d}$, there exists a subspace $L\in \Gr{\ell}{d}$ and an affine hyperplane $H$ in $L$ such that $s_1(L)\in H,\dots,s_d(L)\in H$.
\end{corollary}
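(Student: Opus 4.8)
The plan is to deduce this directly from Corollary~\ref{cor : ham-sandwich} by encoding each section as a ball-valued mass assignment. Fix once and for all a radius $\varepsilon>0$ (any value will do, say $\varepsilon=1$), and for each $i\in\{1,\dots,d\}$ let $\mu_i:=\mu_{s_i}$ be the mass assignment $L\longmapsto B_L(s_i(L),\varepsilon)$ on $\Gr{\ell}{d}$ described in the paragraph preceding the statement. Since $s_i$ is continuous and the (normalised) uniform measure on a Euclidean ball depends weakly continuously on the position of its centre, this is indeed a cross-section of the fibre bundle \eqref{eq: mass_assignment}, i.e.\ an honest mass assignment; moreover each fibre measure $\mu_i^L$ is a mass on $L$, as a ball has positive finite volume and charges no affine hyperplane.

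Next I would apply Corollary~\ref{cor : ham-sandwich} to the collection $\M=(\mu_1,\dots,\mu_d)$: because the $4$-tuple $(d,\ell,1,d)$ is mass assignment admissible, there exist a linear subspace $L\in\Gr{\ell}{d}$ and a $1$-arrangement $\HH^L=\big(H(v;a)\big)$ in $L$ --- that is, a single affine hyperplane $H:=H_{v;a}\subseteq L$ --- such that $H$ equiparts $(\mu_1^L,\dots,\mu_d^L)$.

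The final step is the elementary geometric observation that does the real work: for each $i$ the mass $\mu_i^L$ is the one induced by the closed Euclidean ball $B_L(s_i(L),\varepsilon)$ in $L$, and an affine hyperplane in $L$ splits such a ball into two pieces of equal $\ell$-dimensional volume if and only if it passes through the centre of the ball. Since $H$ equiparts $\mu_i^L$, it must contain $s_i(L)$; as this holds for every $i\in\{1,\dots,d\}$, we conclude $s_1(L),\dots,s_d(L)\in H$, which is exactly the assertion.

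I expect no genuine obstacle here beyond bookkeeping: the only point needing a word of justification is that $\mu_s$ really defines a continuous section of \eqref{eq: mass_assignment} (weak continuity of $L\mapsto B_L(s(L),\varepsilon)$), and the only geometric input is the ``a ball is bisected only by hyperplanes through its centre'' fact, both of which are routine. The restriction $\ell\le d-1$ in the statement is harmless, being merely a sub-range of the range $1\le\ell\le d$ covered by Corollary~\ref{cor : ham-sandwich}.
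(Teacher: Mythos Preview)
Your proposal is correct and follows essentially the same argument as the paper: the paper itself constructs the ball-valued mass assignments $\mu_{s_i}$ in the paragraph immediately preceding the corollary and deduces the statement from Corollary~\ref{cor : ham-sandwich} via the ``a hyperplane bisects a ball iff it passes through its centre'' observation. Your write-up is slightly more explicit about continuity, but the strategy is identical.
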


\medskip
While Corollary \ref{cor : ham-sandwich-1} is an easy consequence of Corollary \ref{cor : ham-sandwich} and while it does not use much information about the Grassmann manifold $\Gr{\ell}{d}$, one can deduce more by using some additional information about the Stiefel--Whitney classes of $\gamma_{\ell}^d$.
More precisely, the so called intersection lemma \cite[Lem.\,4.3]{Blagojevi-Matschke-Ziegler-2011} in combination with the fact that $w_{\ell}(\gamma_{\ell}^d)^{d-\ell}\neq 0$ does not vanish, see \cite[Lem.\,1.2]{Hiller-1980}, yields the following fact: For every collection of $d-\ell$ sections $s_1,\dots,s_{d-\ell}$ of $\gamma_{\ell}^d$ there exists a subspace $L\in \Gr{\ell}{d}$ with the property that  $s_1(L)=\dots=s_{d-\ell}(L)$.
In particular, the points $s_1(L), \dots,s_{d-\ell}(L),s_{d-\ell+1}(L),\dots s_{d}(L)$ lie on a hyperplane in $L$.

\medskip 
The major consequence of Theorem  \ref{theorem : Main} is the following numerical criterion for a $4$-tuple $(d,\ell,k,j)$ to be mass  assignment admissible.
The proof of this result is given in Section \ref{Proof of Main numerical theorem}.

\begin{theorem}
\label{theorem : Main numerical}	
Let $d\geq 2$, $k\geq 1$, $j\geq 1$, $\ell\geq 1$ and $t\geq 0$, $r\geq 0$ be integers with $1\leq k\leq \ell\leq d$.
If $j=2^t+r$ with $0\leq r\leq 2^{t}-1$, and $d\geq 2^{t+k-1}+r$, then the $4$-tuple $(d,\ell,k,j)$ is mass assignment admissible.  
\end{theorem}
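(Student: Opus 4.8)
Here is the plan I would follow.

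The plan is to derive Theorem~\ref{theorem : Main numerical} from Theorem~\ref{theorem : Main} together with the classical Gr\"unbaum--Hadwiger--Ramos upper bound. Write $j=2^t+r$ with $0\le r\le 2^t-1$; then $2^t\le j<2^{t+1}$, so $\lfloor\log_2 j\rfloor=t$ and the hypothesis $d\ge 2^{t+k-1}+r$ is exactly $d\ge j+(2^{k-1}-1)2^{\lfloor\log_2 j\rfloor}$. For $\ell=d$ the assertion is precisely \cite[Thm.\,39]{ManiLevitska-Vrecica-Zivaljevic-2006}, so assume from now on that $k\le\ell\le d-1$; by Theorem~\ref{theorem : Main} it suffices to prove $e_{k,j}\notin\II_{d,\ell,k}$. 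The key step I would establish is the reduction
\[
e_{k,j}\notin(x_1^d,\dots,x_k^d)\ \text{in}\ \F_2[x_1,\dots,x_k]\qquad\Longrightarrow\qquad e_{k,j}\notin\II_{d,\ell,k}\ \text{in}\ R_{d,\ell,k},
\]
so that the mass assignment problem over $\Gr{\ell}{d}$ inherits every upper bound of the classical ($\ell=d$) problem.

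To prove this reduction I would work in the geometric model of the quotient ring. Since the generators of $\II_{d,\ell,k}$ are exactly the relations making each $x_r$ a root of the total Stiefel--Whitney polynomial $\sum_{s=0}^{\ell}w_s(\gamma_\ell^d)T^{\ell-s}$, the projective bundle formula gives a ring isomorphism $R_{d,\ell,k}/\II_{d,\ell,k}\cong H^*(X;\F_2)$, where $X$ is the $k$-fold fibrewise product over $\Gr{\ell}{d}$ of the projective bundle $\mathbb{P}(\gamma_\ell^d)$, with $x_r$ the first Stiefel--Whitney class of the $r$-th tautological line bundle, $w_s\mapsto w_s(\gamma_\ell^d)$ and $\overline{w}_r\mapsto w_r((\gamma_\ell^d)^\perp)$. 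Fibre integration $\pi_!$ along $\pi\colon X\to\Gr{\ell}{d}$ (fibre $(\RP{\ell-1})^k$), together with the projection formula, satisfies $\pi_!\big(\prod_m x_m^{a_m}\big)=\prod_m\overline{w}_{a_m-\ell+1}(\gamma_\ell^d)$, with $\overline{w}_p(\gamma_\ell^d)=0$ for $p<0$ or $p>d-\ell$. Now suppose $e_{k,j}$ has a monomial $\prod_m x_m^{b_m}$ of coefficient $1$ with all $b_m\le d-1$ --- this is exactly what $e_{k,j}\notin(x_1^d,\dots,x_k^d)$ provides --- and set $c:=\prod_m x_m^{(d-1)-b_m}$. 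The homogeneous polynomial $e_{k,j}c$ has every monomial of exponent sum $k(d-1)$; a monomial contributes to $\pi_!(e_{k,j}c)$ only if all of its exponents are $\le d-1$, and the unique exponent vector with sum $k(d-1)$ and all entries $\le d-1$ is $(d-1,\dots,d-1)$, whose coefficient in $e_{k,j}c$ equals the coefficient of $\prod_m x_m^{b_m}$ in $e_{k,j}$, namely $1$. Hence $\pi_!(e_{k,j}c)=\overline{w}_{d-\ell}(\gamma_\ell^d)^{k}$, which under $\Gr{\ell}{d}\cong\Gr{d-\ell}{d}$ equals $w_{d-\ell}(\gamma_{d-\ell}^d)^{k}$; by \cite[Lem.\,1.2]{Hiller-1980} $w_{d-\ell}(\gamma_{d-\ell}^d)^{\ell}\ne 0$, and since $k\le\ell$ also $w_{d-\ell}(\gamma_{d-\ell}^d)^{k}\ne 0$. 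Therefore $e_{k,j}c\ne 0$ in $H^*(X;\F_2)$, so $e_{k,j}\ne 0$, i.e. $e_{k,j}\notin\II_{d,\ell,k}$.

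It then remains to prove the classical polynomial statement $e_{k,j}\notin(x_1^d,\dots,x_k^d)$ for $d\ge 2^{t+k-1}+r$, i.e. to exhibit a monomial of $e_{k,j}$ of coefficient $1$ all of whose exponents are $\le 2^{t+k-1}+r-1$; this is the algebraic content of \cite[Thm.\,39]{ManiLevitska-Vrecica-Zivaljevic-2006}, and I would isolate it using the Dickson--Moore identity
\[
\prod_{(\alpha_1,\dots,\alpha_k)\in\F_2^k\setminus\{0\}}(\alpha_1x_1+\cdots+\alpha_kx_k)=\det\big(x_i^{\,2^{j-1}}\big)_{1\le i,j\le k}=m_{(2^{k-1},2^{k-2},\dots,2,1)},
\]
the monomial symmetric polynomial. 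For $r=0$ this yields at once $e_{k,2^t}=m_{(2^{t+k-1}-1,\,2^{t+k-2}-1,\,\dots,\,2^t-1)}$, each of whose monomials has coefficient $1$ and largest exponent $2^{t+k-1}-1\le d-1$. For $r\ge 1$ one factors $e_{k,j}=\big(\prod_{\alpha\neq 0}\textstyle\sum_i\alpha_ix_i\big)^{2^t}e_{k,r}$ and multiplies the monomial $x_1^{2^{t+k-1}}x_2^{2^{t+k-2}}\cdots x_k^{2^t}$ of the first factor by the lexicographically leading monomial $\prod_m x_m^{2^{m-1}r-1}$ of $e_{k,r}$ (for the order $x_k>\cdots>x_1$); the inequality $r\le 2^t-1$ is precisely what forces every exponent of the product to be $\le 2^{t+k-1}+r-1$. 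Together with the reduction above and Theorem~\ref{theorem : Main} this completes the proof.

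The step I expect to be the main obstacle is this last combinatorial estimate when $r>0$: controlling the exponents occurring in $e_{k,j}=\big(\prod_{\alpha\neq 0}\sum_i\alpha_ix_i\big)^{j}/(x_1\cdots x_k)$ and, crucially, verifying that the balanced monomial just exhibited is not annihilated by $\F_2$-cancellation --- exactly the delicate point underlying the classical Gr\"unbaum--Hadwiger--Ramos upper bound. By comparison the two ingredients behind the reduction --- the identification $R_{d,\ell,k}/\II_{d,\ell,k}\cong H^*(X;\F_2)$ with its fibre-integration formula, and Hiller's non-vanishing $w_{d-\ell}(\gamma_{d-\ell}^d)^{k}\ne 0$ --- are routine.
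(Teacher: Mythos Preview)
Your two-step strategy --- reduce $e_{k,j}\notin\II_{d,\ell,k}$ to the classical statement $e_{k,j}\notin(x_1^d,\dots,x_k^d)$, then verify the latter --- is exactly the structure of the paper's proof (Lemmas~\ref{lem : intersection of ideal and subring} and~\ref{lem : Euler class not in the ideal}). Your reduction step is correct but genuinely different from the paper's: you identify $R_{d,\ell,k}/\II_{d,\ell,k}$ with $H^*$ of the $k$-fold fibre product of $\mathbb{P}(\gamma_\ell^d)$ and use fibre integration $\pi_!$ to detect non-vanishing, whereas the paper argues purely algebraically, performing division with remainder by each $\beta_i$ (via the formula of \cite{Crabb2013}) to show that $x_1^{d-1}\cdots x_k^{d-1}$ maps to $\overline{w}_{d-\ell}^{\,k}\cdot\prod_i(x_i^{\ell-1}+w_1x_i^{\ell-2}+\cdots+w_{\ell-1})\neq 0$ in $R_{d,\ell,k}/\II_{d,\ell,k}$. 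Both arguments ultimately rest on Hiller's non-vanishing $\overline{w}_{d-\ell}^{\,k}\neq 0$ for $k\le\ell$; your Gysin-map version is more conceptual and makes the projective-bundle structure transparent, while the paper's is more self-contained.

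For the classical polynomial statement, however, the paper does \emph{not} attempt to exhibit a single explicit monomial when $r>0$. Instead it runs an induction on $k$: writing $\Delta_k=\Delta_{k-1}\sum_{i=0}^{k-1}D_{k-1,i}\,x_k^{2^i}$ and expanding $\Delta_k^j$ via the binary representation $j=2^{t_1}+\cdots+2^{t_a}$, one shows that the coefficient of $x_k^{\,d}$ in $\Delta_k^j$ (as a polynomial in $x_k$ over $\F_2[x_1,\dots,x_{k-1}]$) equals $\Delta_{k-1}^{\,j+r}$ --- the point being that the exponent $E=\sum_b 2^{i_b+t_b}$ equals $d=2^{t+k-1}+r$ \emph{only} for $(i_1,\dots,i_a)=(k-1,0,\dots,0)$. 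The induction hypothesis (applied with $j'=2^{t+1}+r\geq j+r$) then furnishes a monomial of $\Delta_{k-1}^{\,j+r}/(x_1\cdots x_{k-1})$ with all exponents $<d$, which together with $x_k^{d-1}$ gives the required monomial of $e_{k,j}$. This sidesteps precisely the $\F_2$-cancellation issue you flag: your ``balanced'' monomial for $r>0$ is not a product of leading terms in any single monomial order, and verifying that its coefficient is $1$ would require ruling out contributions from the other $k!-1$ monomials of $\Delta_k^{2^t}$ --- for $k\geq 3$ several of those do yield factorisations $A'B'=M$ with nonnegative exponents, so one would have to compute the corresponding coefficients in $e_{k,r}$ and check parity. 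The paper's inductive route is the clean way through this obstacle.
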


An interesting observation is that the condition for the $4$-tuple $(d,\ell,k,j)$, $1\leq k\leq \ell\leq d-1$, to be admissible given by Theorem \ref{theorem : Main numerical} does not depend on $\ell$ whatsoever.
Furthermore, using push-forward measures along the orthogonal projections onto $\ell$-dimensional subspaces, we can see that if the $4$-tuple $(d,\ell,k,j)$ is mass assignment admissible, then $d\geq \Delta(k,j)$ and so  $d\geq \lceil \tfrac{2^k-1}{k}j\rceil$.
Hence, a necessary condition for $(d,\ell,k,j)$ to be mass assignment admissible also does not involve $\ell$.
Consequently, as in the case of the classical Gr\"unbaum--Hadwiger--Ramos mass partition problem, we get that the upper and the lower bounds for dimension $d$ coincide for $k=2$ and $j=2^{t+1}-1$.
In other words we get the following corollary.

\begin{corollary}
	For every integer $t\geq 1$ we have that 
	\[
	\Delta(2,2^{t+1}-1) =\min\{ d: (d,\ell,2,2^{t+1}-1)\text{ \rm is mass assignment admissible}\}.
	\]
\end{corollary}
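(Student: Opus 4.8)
The plan is to show that the right-hand side, which I denote by
$m:=\min\{\, d : (d,\ell,2,2^{t+1}-1)\text{ is mass assignment admissible}\,\}$,
equals $3\cdot 2^{t}-1$, and then to record that the classical Gr\"unbaum--Hadwiger--Ramos number $\Delta(2,2^{t+1}-1)$ --- the minimal dimension $d$ for which every collection of $2^{t+1}-1$ masses in $\R^{d}$ is equiparted by some $2$-arrangement --- is also $3\cdot 2^{t}-1$; the asserted equality then follows at once. Throughout I fix $\ell\geq 2$, whose precise value is immaterial: as observed just before the corollary, for $k=2$ and $j=2^{t+1}-1$ the sufficient and the necessary conditions for admissibility coincide and neither involves $\ell$. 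I also write $j:=2^{t+1}-1=2^{t}+(2^{t}-1)$, so that in the notation of Theorem \ref{theorem : Main numerical} the exponent is $t$ and the remainder is $r=2^{t}-1$, which indeed satisfies $0\leq r\leq 2^{t}-1$.

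First I would obtain $m\leq 3\cdot 2^{t}-1$ by a direct appeal to Theorem \ref{theorem : Main numerical}. Taking $d:=3\cdot 2^{t}-1$ and $k:=2$, the hypothesis $d\geq 2^{t+k-1}+r=2^{t+1}+(2^{t}-1)=3\cdot 2^{t}-1$ holds with equality, and since $t\geq 1$ forces $d\geq 5$ the chain $1\leq k\leq \ell\leq d$ is satisfied as well. Hence $(3\cdot 2^{t}-1,\ell,2,2^{t+1}-1)$ is mass assignment admissible, so $m\leq 3\cdot 2^{t}-1$.

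For the reverse inequality $m\geq 3\cdot 2^{t}-1$ I would run the push-forward reduction to the classical problem that is indicated in the paragraph preceding the corollary. Suppose $(d,\ell,2,j)$ is mass assignment admissible, and let $\nu_{1},\dots,\nu_{j}$ be arbitrary masses on $\R^{d}$. For $L\in\Gr{\ell}{d}$ write $\pi_{L}\colon\R^{d}\to L$ for the orthogonal projection; then $L\mapsto(\pi_{L})_{*}\nu_{i}$ defines a mass assignment on $\Gr{\ell}{d}$, since the push-forward of a mass under $\pi_{L}$ again vanishes on every affine hyperplane $h\subseteq L$, its preimage $\pi_{L}^{-1}(h)$ being an affine hyperplane of $\R^{d}$. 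By admissibility there are a subspace $L$ and a $2$-arrangement $\HH^{L}$ in $L$ equiparting $(\pi_{L})_{*}\nu_{1},\dots,(\pi_{L})_{*}\nu_{j}$; pulling the two oriented hyperplanes of $\HH^{L}$ back along $\pi_{L}$ yields a $2$-arrangement $\HH$ in $\R^{d}$ with $\nu_{i}(\mathcal{O}_{\alpha}^{\HH})=\big((\pi_{L})_{*}\nu_{i}\big)(\mathcal{O}_{\alpha}^{\HH^{L}})=\tfrac14\,\nu_{i}(\R^{d})$ for every $\alpha\in\Z_{2}^{2}$ and every $i$. Therefore $d\geq\Delta(2^{t+1}-1,2)$, and the Ramos lower bound $\tfrac{2^{k}-1}{k}\,j\leq\Delta(j,k)$ specialises, for $k=2$, to $\Delta(2^{t+1}-1,2)\geq\big\lceil\tfrac32(2^{t+1}-1)\big\rceil=\big\lceil 3\cdot 2^{t}-\tfrac32\big\rceil=3\cdot 2^{t}-1$. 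Hence $m\geq 3\cdot 2^{t}-1$, and combined with the previous paragraph $m=3\cdot 2^{t}-1$.

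It remains to identify this value with $\Delta(2,2^{t+1}-1)$. For $k=2$ one has $2^{k-1}-1=1$, so the upper bound $\Delta(j,k)\leq j+(2^{k-1}-1)\,2^{\lfloor\log_{2}j\rfloor}$ of Mani-Levitska, Vre\'cica and \v{Z}ivaljevi\'c gives $\Delta(2^{t+1}-1,2)\leq(2^{t+1}-1)+2^{\lfloor\log_{2}(2^{t+1}-1)\rfloor}=(2^{t+1}-1)+2^{t}=3\cdot 2^{t}-1$, which together with the Ramos bound used above shows $\Delta(2,2^{t+1}-1)=3\cdot 2^{t}-1=m$, as desired. I do not anticipate a genuine obstacle here: the one step requiring a little care is the push-forward reduction --- one must verify that orthogonal projection carries masses to masses and that an equipartition of the projected masses lifts to an equipartition of the originals along the pulled-back arrangement --- and this is routine once the definition of a $k$-arrangement is unwound, no general-position hypothesis being imposed on arrangements in this paper.
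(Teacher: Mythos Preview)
Your argument is correct and is exactly the one the paper sketches in the paragraph preceding the corollary: the upper bound comes from Theorem~\ref{theorem : Main numerical} with $j=2^{t}+(2^{t}-1)$ and $k=2$, the lower bound from the push-forward reduction combined with the Ramos bound, and both meet at $3\cdot 2^{t}-1$, which is the known value of $\Delta(2^{t+1}-1,2)$. The only cosmetic point is that you alternate between the orderings $\Delta(2,2^{t+1}-1)$ and $\Delta(2^{t+1}-1,2)$; the paper's convention is $\Delta(j,k)$, so the symbol in the corollary is itself a slip, and you are tacitly (and correctly) reading it as $\Delta(2^{t+1}-1,2)$.
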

 
\medskip 
Finally, in Table \ref{table Comparison}  we compare the result of Theorem \ref{theorem : Main numerical} with the corresponding known results for the classical Gr\"unbaum--Hadwiger--Ramos mass partition problem for some concrete choices of parameters  $(d, \ell,j,k)$.
For that we recall the known equalities 
$
	\Delta(2^t +1, 2) = 3\cdot 2^{t-1} + 1
$
and
$
	\Delta(2^{t+1} -1, 2) = 3\cdot 2^t - 1
$
where $t \geq 2$. 
For more details of these two results see for example \cite{Blagojevic-Frick-Haase-Ziegler-2018}.

\begin{table}[h!]
	\centering 
	 \begin{tabular}{||m{4cm} | m{6cm} ||} 
		 \hline  
		  $\Delta( \textcolor{blue}{j},\textcolor{magenta}{k} ) $ &  Admissible $4$-tuples $(d, \ell, \textcolor{magenta}{k}, \textcolor{blue}{j}=2^t+r )$   \\ [1ex] 
		 \hline\hline
		 $\Delta( \textcolor{blue}{2} , \textcolor{magenta}{2} ) =  3 $	&	
		 	$(  8	,  3 ,  \textcolor{magenta}{2}  ,	\textcolor{blue}{  4  }	  )$ {\tiny( considering $t=2$ and $r=0$)}  \\
		 \hline
		 $\Delta( \textcolor{blue}{1} , \textcolor{magenta}{3} ) =  3 $	&	
		 	$(  9	,  3 ,  \textcolor{magenta}{3}  ,	\textcolor{blue}{  3  }	  )$ {\tiny( considering $t=1$ and $r=1$)}  \\
		 \hline
		 $\Delta( \textcolor{blue}{5} , \textcolor{magenta}{2} ) =  8 $	&	
		 	$(  11	,  8 ,  \textcolor{magenta}{2}  ,	\textcolor{blue}{  7  }	  )$ {\tiny( considering $t=2$ and $r=3$)}  \\
		 \hline
		 $\Delta( \textcolor{blue}{9} , \textcolor{magenta}{2} ) =  14 $ & 
		 	$(  17	,  14 ,  \textcolor{magenta}{2}  ,	\textcolor{blue}{  15  }	)$ {\tiny( considering $t=3$ and $r=7$)}  \\ 	 
		 \hline
		 $\Delta( \textcolor{blue}{7} , \textcolor{magenta}{2} ) =  11 $ & 
		 	$(  23	,  11 ,  \textcolor{magenta}{2}  ,	\textcolor{blue}{  15  }	)$ {\tiny( considering $t=3$ and $r=7$)} \\ 
		 \hline
		 $\Delta( \textcolor{blue}{15} , \textcolor{magenta}{2} ) =  23 $ & 
		 	$(  47	,  23 ,  \textcolor{magenta}{2}  ,	\textcolor{blue}{  31  }	  )$ {\tiny( considering $t=4$ and $r=15$)} \\ 		 
		 \hline
	\end{tabular}
	\caption{Comparison between the classical Gr\"unbaum--Hadwiger\allowbreak--Ramos mass partition problem and Theorem 
	\ref{theorem : Main numerical}. }
	\label{table Comparison}
\end{table}

The result in the Table \ref{table Comparison} illustrate the fact that the Gr\"unbaum--Hadwiger--Ramos problem for mass assignments can be solved positively for more masses than one can hope for in the classical case. 
This should not be a surprise since we are now given families of masses from which we can chose a particular collection to equipart.

\section{From the Gr\"unbaum--Hadwiger--Ramos problem for mass assignments to a parametrized Boursuk--Ulam type question}
\label{sec : From the Grunbaum--Hadwiger--Ramos problem for mass assignments to a parametrized Boursuk--Ulam type question}

In this section we demonstrate how the Gr\"{u}nbaum--Hadwiger--Ramos problem for mass assignments induces a parametrized Boursuk--Ulam type question.

\medskip
Let $d\geq 2$, $\ell\geq 1$, $k\geq 1$ and $j\geq 1$ be integers with $1\leq\ell\leq d-1$.
Consider a collection of $j$ mass assignments $\M=(\mu_1, \dots, \mu_j)$ on the Grassmann manifold $\Gr{\ell}{d}$. 
Our aim is to find a linear subspace $L \in \Gr{\ell}{d}$, and a $k$-arrangement $\HH^L=(H_1^L, \dots, H_k^L)$ in $L$, such that $\HH^L$ equiparts the collection of masses $\M^L=(\mu_1^L , \dots, \mu_j^L)$.

\medskip
In order to apply topological methods and derive an appropriate configuration test map scheme for our problem we make an additional assumption on the mass assignment $\mu_1$.
We assume that for every linear subspace $L \in \Gr{\ell}{d}$ the associated mass $\mu_1^L$ has compact and connected support.
In particular, this means that for every direction in $L$ there exists a unique oriented affine hyperplane orthogonal to the direction equiparting $\mu_1^L$.  
Or in other words, for fixed $L$ the space of all oriented affine hyperplanes in $L$ equiparting $\mu_1^L$ is homeomorphic to the sphere $S(L)\approx S^{\dim (L)-1}$.
This assumption will not affect the final results since we can strongly approximate each mass by masses with compact connected support.

\medskip
Now, in several steps we derive a configuration test map scheme which enables the application of advanced topological methods to the Gr\"unbaum--Hadwiger--Ramos problem for mass assignments.

\subsection{Configuration space}
\label{subsec : Configuration space}
For the configuration space associated to our problem, or in other words the space of all solution candidates, we take collections of $k$-arrangements in each of the linear spaces $L \in \Gr{\ell}{d}$ such that each of $k$ affine hyperplanes in $L$ equipart the mass $\mu_1^L$ (determined by the first mass assignment $\mu_1$).

\medskip
More precisely, let us first consider the tautological bundle on the Grassmann manifold $\Gr{\ell}{d}$:
\[
\xi := \gamma_{\ell}^{d} = \big(E(\gamma_{\ell}^{d}), \ B(\gamma_{\ell}^{d})=\Gr{\ell}{d},\  E(\gamma_{\ell}^{d})\xrightarrow{~\pi~} B(\gamma_{\ell}^{d}), \ F(\gamma_{\ell}^{d})=\R^{\ell} \big).
\]
The assumption on the mass assignment $\mu_1$ allows us to see the associated sphere bundle of $\xi$:
\[
S\xi := S\gamma_{\ell}^{d} = \\ 
\big(E(S\xi), \ B(S\xi)=\Gr{\ell}{d},\   E(S\xi)\xrightarrow{~S\pi~} B(S\xi), \ F(S\xi)=S^{\ell-1} \big),
\]
as the space of all oriented affine hyperplanes of linear subspaces $L\in\Gr{\ell}{d}$ which equipart the corresponding masses $\mu_1^L$.
In this way we have already obtained a configuration space for the case $k=1$.

\medskip
For $k\geq 2$, that is for arrangements with more than one hyperplane, we proceed as follows. 
We consider the $k$-fold product bundle  $(S\xi)^k$:
\[
(S\xi)^k = 
(E(S\xi)^k, \ \Gr{\ell}{d}^k,\  E(S\xi)^k\xrightarrow{q_k:=(S\pi)^k} \Gr{\ell}{d}^k, \ F((S\xi)^k)=(S^{\ell-1})^k ),
\]
and take the pullback along the diagonal embedding $\Delta_k\colon \Gr{\ell}{d}\longrightarrow \Gr{\ell}{d}^k$: 
\[ 
\xymatrixcolsep{1in}	
\xymatrix{
E\bigC{\Delta^*_k( (S\xi)^k)} \ar[r] \ar[d]^{p_k} & E(S\xi)^k   \ar[d]^{ q_k  }  \\
       \Gr{\ell}{d} \ar[r]^{\Delta_k} &  \Gr{\ell}{d}^k.
  }
\]  
The space of all solution candidates, associated to the parameters $(d,\ell,k)$, is the total space of the pullback bundle:
\begin{align*}
\CC(d,\ell,k) &:= 	E\bigC{\Delta^*_k( (S\xi)^k)}\\
&\ = \{ (L ; v_1, \ldots, v_k) \mid L \in \Gr{\ell}{d}, v_i \in L, \;  \|v_1\|=\cdots=\|v_k\| =1 \}.
\end{align*}
This means that $\CC(d,\ell,k)$ is the total space of the fibre bundle $\Delta^*_k( (S\xi)^k)$:
\begin{equation}
\label{bundle : configuration space}
\xymatrix{
(S^{\ell-1})^k\ar[r] & \CC(d,\ell,k)\ar[r]^-{p_k} & \Gr{\ell}{d},
}	
\end{equation}
where the map $p_k$ is given by $(L ; v_1, \ldots, v_k)\longmapsto L$.
Recall that by our assumption for every $ (L ; v_1, \ldots, v_k) \in \CC(d,\ell,k)$ each of the vectors $v_i$ parametrizes the oriented affine hyperplane in $L$ orthogonal to $v_i$, oriented by $v_i$, which equiparts the mass $\mu_1^L$.

\medskip
The Weyl group $\Sympm_k:=\Z_2^k \rtimes \Sym_k$, also called the group of signed permutations, acts fibrewise on $\CC(d,\ell,k)$ by
\[
((\beta_1, \dots, \beta_k) \rtimes \tau)\cdot (L ; v_1, \ldots, v_k) = (L ;  (-1)^{\beta_1} v_{\tau^{-1}(1)} , \ldots, (-1)^{\beta_k} v_{\tau^{-1}(k)}),
\]
for  $((\beta_1, \dots, \beta_k) \rtimes \tau) \in \Sympm_k$ and $(L ; v_1, \ldots, v_k)\in \CC(d,\ell,k)$.

\subsection{The test space}
\label{subsec : The test space}

Consider the real vector space $\R^{\Z_2^k}$ and its  codimension $1$ subspace:
\[ 
U_k:=\Big\{ (y_{\alpha})_{\alpha\in \Z_2^k} \in \R^{\Z_2^k} : \sum_{\alpha\in \Z_2^k} y_{\alpha} =0\Big\}.
\]
The structure of a real $\Sympm_k$-representation on the vectors space $\R^{\Z_2^k}$ is induced as follows. 
The element $((\beta_1, \dots, \beta_k) \rtimes \tau) \in \Sympm_k$ acts on the vector $(y_{(\alpha_1,\dots,\alpha_k)})_{(\alpha_1,\dots,\alpha_k) \in \Z_2^k}$ in $\R^{\Z_2^k}$ by permuting the indices:
\[
((\beta_1, \dots, \beta_k) \rtimes \tau) \cdot (\alpha_1, \dots, \alpha_k) = (\beta_1 + \alpha_{\tau^{-1}(1)}, \dots, \beta_k + \alpha_{\tau^{-1}(k)}).
\]
Here the addition is assumed to be in $\Z_2$. With respect to this action, the subspace $U_k$ is a $\Sympm_k$-subrepresentation of $\R^{\Z_2^k}$. 

\medskip
Let us first consider $\R^{\Z_2^k}$ as an $\Z_2^k$-representation where $\Z_2^k\subseteq \Z_2^k \rtimes \Sym_k=\Sympm_k$.
For $\omega=(\omega_1,\dots,\omega_k)\in \Z_2^k$ we denote by $V_{\omega}$ the $1$-dimensional real $\Z_2^k$-representation defined by:
\[
\alpha\cdot v:=(-1)^{\omega_1\alpha_1}\cdots (-1)^{\omega_k\alpha_k}v=(-1)^{\omega_1\alpha_1+\cdots +\omega_k\alpha_k}v
\]
for $\alpha=(\alpha_1,\dots,\alpha_k)$ and $v\in V_{\omega}$.
Then there are isomorphisms of $\Z_2^k$-representations:
\[
\R^{\Z_2^k}\cong \bigoplus_{\omega\in\Z_2^k}V_{\omega}
\qquad\text{and}\qquad
U_k\cong \bigoplus_{\omega\in\Z_2^k{\setminus}\{0\}}V_{\omega}.
\]

\medskip
In order to obtain a decomposition of $\R^{\Z_2^k}$, now as an $\Sympm_k$-representation, let us first partition $\Z_2^k$, as a set, into the disjoint union $\Z_2^k=A_0\sqcup   A_1\sqcup \dots\sqcup A_k$ where, for $0\leq i\leq k$, we define
\[
A_i:=\{ (\omega_1,\dots,\omega_k)\in \Z_2^k : \omega_1+\dots+\omega_k=i\}.
\]
Addition is assumed to be in $\Z$.
It is not hard to see that for every $0\leq i\leq k$ the direct sum $W_i:=\bigoplus_{\omega\in A_i}V_{\omega}$ is a $\Sympm_k$-representation, and in addition there are isomorphisms of $\Sympm_k$-representations:
\[
\R^{\Z_2^k}\cong W_0\oplus W_1\oplus\dots\oplus W_k
\qquad\text{and}\qquad
U_k\cong W_1\oplus\dots\oplus W_k.
\]
We set $U_k':= W_2\oplus\dots\oplus W_k$, and consequently $U_k\cong W_1\oplus U_k'$.

\medskip
The test space, associated to the parameters $(d,\ell,j,k)$, is the total space of the following trivial vector bundle $\pi_2$:
\begin{equation}
\label{bundle : test map}
\xymatrix{
U_k'\oplus U_k^{\oplus j-1}\ar[r] & (U_k'\oplus U_k^{\oplus j-1})\times \Gr{\ell}{d}\ar[r]^-{\pi_2} & \Gr{\ell}{d}.
}
\end{equation}
The reason for such a choice of the test space will become clear in the next section and Theorem \ref{thm : cs-tm scheme}.
The group $\Sympm_k$ acts on the product $(U_k'\oplus U_k^{\oplus j-1})\times \Gr{\ell}{d}$ diagonally where the action on the Grassmann manifold $\Gr{\ell}{d}$ is assumed to be trivial.

\subsection{The test map}
\label{subsec : The test map}

We recall that the parameters $(d,\ell,j,k)$ are fixed and in addition we have fixed a collection of $j$ mass assignments $\M=(\mu_1, \dots, \mu_j)$ on the Grassmann manifold $\Gr{\ell}{d}$. 
The test map associated to the collection $\M$ is the bundle map
\[
\phi_{\M} \colon \CC(d,\ell,k)\longrightarrow (U_k'\oplus U_k^{\oplus j-1})\times \Gr{\ell}{d}
\]
defined by
\begin{align*}
(L ; v_1, \ldots , v_k)	\longmapsto
  & 
\Big(L ;
\big(\mu^L_1(H^{\alpha_1}_{v_1} \cap \dots \cap H^{\alpha_k}_{v_k}) - \tfrac{1}{2^k}\mu^L_1(L) \big)_{(\alpha_1,\dots,\alpha_k)\in \Z_2^k} , \\
&\big( \big(\mu^L_i(H^{\alpha_1}_{v_1} \cap \dots \cap H^{\alpha_k}_{v_k}) - \tfrac{1}{2^k}\mu^L_i(L) \big)_{(\alpha_1,\dots,\alpha_k)\in\Z_2^k} \big)_{2\leq i\leq k  } \Big).
\end{align*}
The fact that 
\[
\big(\mu^L_1(H^{\alpha_1}_{v_1} \cap \dots \cap H^{\alpha_k}_{v_k}) - \tfrac{1}{2^k}\mu^L_1(L)  \big)_{(\alpha_1,\dots,\alpha_k)\in \Z_2^k} \ \in U_k'
\]
is a consequence of our assumption that for every $ (L ; v_1, \ldots, v_k) \in \CC(d,\ell,k)$ each of the vectors $v_i$ parametrizes the oriented affine hyperplane in $L$ orthogonal to $v_i$, oriented by $v_i$, which equiparts the mass $\mu_1^L$. 
 
 \medskip
 The test map $\phi_{\M}$ is a (fibrewise) $\Sympm_k$-equivariant map with respect to the already introduced actions of $\Sympm_k$ on the configuration space $\CC(d,\ell,k)$ and on the test space $(U_k'\oplus U_k^{\oplus j-1})\times \Gr{\ell}{d}$.
 The key property of the construction we have made is that {\em for the given collection of mass assignments $\M$ there exists a $k$ arrangement in the linear subspace $L\in \Gr{\ell}{d}$ which equiparts the collection of masses $\M^L$ if and only if $(L ; 0,0) \in \im(\phi_{\M})\cap \big( (U_k' \oplus U_k^{\oplus j-1})\times \Gr{\ell}{d}\big)$}.
 Consequently we have proved the following theorem.

 \begin{theorem}
 \label{thm : cs-tm scheme}	
 Let $d\geq 2$, $\ell\geq 1$, $k\geq 1$ and $j\geq $ be integers with $1\leq\ell\leq d-1$.
 Assume that $\Delta^*_k( (S\xi)^k)$ and $\pi_2$ are the fibre bundles we already introduced in \eqref{bundle : configuration space} and \eqref{bundle : test map}, and denote by $S\pi_2$ the associated sphere bundle of $\pi_2$.
 \begin{compactenum}[\rm \quad (a)]
 \item \label{thm : cs-tm scheme - a}	 	
 If $(d,\ell,j,k)$ is not mass assignment admissible, then there exists an $\Sympm_k$-equivariant bundle map $\Delta^*_k( (S\xi)^k)	\longrightarrow	S\pi_2$.
\item \label{thm : cs-tm scheme - b}	 	 
If there in no $\Sympm_k$-equivariant bundle map $\Delta^*_k( (S\xi)^k)\longrightarrow	S\pi_2$, then $(d,\ell,j,k)$ is mass assignment admissible.
 \end{compactenum}

 \end{theorem}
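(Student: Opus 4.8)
\emph{Proof strategy.} Since assertion~(b) is the contrapositive of assertion~(a), it is enough to prove~(a), and I would in fact prove the equivalent statement: if there is no $\Sympm_k$-equivariant bundle map $\Delta^*_k((S\xi)^k)\longrightarrow S\pi_2$, then $(d,\ell,j,k)$ is mass assignment admissible. The argument is the standard ``configuration--test-map'' reduction---turn the failure of an equipartition into a nowhere-zero (fibrewise) test map and then radially normalize---together with an approximation step that reduces everything to first mass assignments whose fibres have compact connected support, which is exactly what makes the constructions of Sections~\ref{subsec : Configuration space}--\ref{subsec : The test map} legitimate.

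First I would handle a collection $\M=(\mu_1,\dots,\mu_j)$ for which $\mu_1^L$ has compact connected support for every $L\in\Gr{\ell}{d}$, so that $\CC(d,\ell,k)$ is the total space of the sphere bundle $\Delta^*_k((S\xi)^k)$ of \eqref{bundle : configuration space} and the bundle map $\phi_{\M}$ is defined. Three facts are needed. (i) $\phi_{\M}$ is a genuine continuous bundle map: a mass assignment is a continuous section of \eqref{eq: mass_assignment}, and in the weak topology on $M_+'(\R^\ell)$ the map $(L;v_1,\dots,v_k)\longmapsto\mu_r^L(H^{\alpha_1}_{v_1}\cap\cdots\cap H^{\alpha_k}_{v_k})$ is continuous. (ii) $\phi_{\M}$ is $\Sympm_k$-equivariant, by the very definitions of the $\Sympm_k$-actions in Sections~\ref{subsec : Configuration space} and~\ref{subsec : The test space}. (iii) By the key property recorded just before the theorem, if $\M^L$ is not equiparted by any $k$-arrangement for any $L$, then the fibre component of $\phi_{\M}$ is nowhere zero, i.e.\ $\im(\phi_{\M})\subseteq\big((U_k'\oplus U_k^{\oplus j-1})\setminus\{0\}\big)\times\Gr{\ell}{d}$. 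Since $U_k'\oplus U_k^{\oplus j-1}$ is an orthogonal $\Sympm_k$-representation, it has a $\Sympm_k$-invariant Euclidean norm, so fibrewise radial normalization $y\mapsto y/\|y\|$ is a $\Sympm_k$-equivariant retraction onto the unit sphere bundle $S\pi_2$; composing it with $\phi_{\M}$ gives the desired equivariant bundle map. Hence, for such $\M$, the absence of an equivariant bundle map forces an equipartition at some $L$.

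For an arbitrary collection $\M$ I would then approximate, choosing $\M^{(n)}$ with $\mu_1^{(n)}$ fibrewise of compact connected support and $\M^{(n)}\to\M$ in a suitably strong (weak-topology, uniform-over-$\Gr{\ell}{d}$) sense---for instance fibrewise convolution of a truncation of each $\mu_1^L$ with a ball of radius $1/n$. By the previous step each $\M^{(n)}$ is equiparted at some $L_n\in\Gr{\ell}{d}$ by a $k$-arrangement with directions $v_1^{(n)},\dots,v_k^{(n)}$ and offsets $a_1^{(n)},\dots,a_k^{(n)}$. Using compactness of $\Gr{\ell}{d}$ and of the unit sphere bundle $S\xi$, plus a uniform \emph{a priori} bound on the $a_i^{(n)}$ (a half-space lying far from the bulk of a finite mass has measure well below half the total, hence cannot be part of an equipartition), I pass to a convergent subsequence and, by the continuity in~(i), obtain an equipartition of $\M$ itself. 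Therefore $(d,\ell,j,k)$ is mass assignment admissible, which is assertion~(b); assertion~(a) follows by contraposition.

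The step I expect to be the main obstacle is this last analytic bookkeeping: checking that the constructions of Sections~\ref{subsec : Configuration space}--\ref{subsec : The test map} require only fibrewise compact connected support of $\mu_1$; making ``strong approximation'' precise in the weak topology on $M_+'(\R^\ell)$, uniformly over the compact base; and securing the uniform bound on the offsets $a_i^{(n)}$ that the limiting argument needs. By contrast the topology---equivariance of $\phi_{\M}$, the passage to the sphere bundle, and the resulting parametrized Borsuk--Ulam-type non-existence question---is entirely formal, and it is exactly this question that Sections~\ref{sec : The Fadell--Husseini index of the configuration and test space} and~\ref{Proof of Main Theorem} address via Fadell--Husseini index theory.
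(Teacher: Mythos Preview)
Your proposal is correct and follows essentially the same route as the paper: the paper derives Theorem~\ref{thm : cs-tm scheme} directly from the construction of $\phi_{\M}$ in Sections~\ref{subsec : Configuration space}--\ref{subsec : The test map} (the sentence ``Consequently we have proved the following theorem'' immediately precedes the statement), using exactly your key observation that a nowhere-zero fibrewise test map normalizes to a map into the sphere bundle. Your treatment is in fact more explicit than the paper's---you spell out the radial normalization via the invariant inner product and the limiting/approximation argument, whereas the paper only remarks at the start of Section~\ref{sec : From the Grunbaum--Hadwiger--Ramos problem for mass assignments to a parametrized Boursuk--Ulam type question} that the compact-connected-support assumption on $\mu_1$ ``will not affect the final results since we can strongly approximate each mass by masses with compact connected support'' and leaves the details to the reader.
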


\section{The Fadell--Husseini index of the configuration and the test space}
\label{sec : The Fadell--Husseini index of the configuration and test space}

In this section, as the essential ingredient of the proof of Theorem \ref{theorem : Main}, we compute the Fadell--Husseini indices of the fibre bundles $\Delta^*_k( (S\xi)^k)$ and $S\pi_2$ with respect to the action of the subgroup $\Z_2^k$ of the group of signed permutations $\Sympm_k$.
For the definition of the fibre bundles recall \eqref{bundle : configuration space} and \eqref{bundle : test map}.
First, we recall what is the Fadell--Husseini index and collect all the necessary tools for the computations which will follow.

\subsection{The Fadell--Husseini index}
\label{subsec : The Fadell--Husseini index}

In $1988$ Edward Fadell and Sufian Husseini, in their seminal paper \cite{Fadell-Husseini-1988}, introduced a notion of the ideal-valued index theory, a covariant functor $\ind_G(\, \underline{\hspace{0.25cm}}\, ;A)$ from the category of topological $G$-spaces into the partially ordered set, seen as a category, of all ideals in the cohomology ring $H^*(\B G;A)$ ordered by inclusion. 
Here $A$ denotes a commutative ring with $1$.  
This means in particular that if $X$ and $Y$ are $G$-spaces and there is a continuous $G$-equivariant map $X\longrightarrow Y$, then  $\ind_G(X;A)\supseteq \ind_G(Y;A)$.

\medskip
In this paper we use a slight extension of the original notion of the ideal-valued index theory to the category of all continuous $G$-equivariant maps from $G$-spaces to the fixed space $B$ equipped with the trivial $G$-action.
More precisely, let $B$ be a fixed topological space with the trivial $G$-action, that is $g\cdot b=b$ for all $b\in B$ and all $g\in G$.
Consider in addition a continuous $G$-equivariant map $p \colon X \longrightarrow B$, which in this case means that $p(g\cdot x)=g\cdot p(x)=p(x)$ for every $x\in X$ and every $g\in G$.
The \emph{Fadell-Husseini index} of $p$ with coefficients in a commutative ring $A$ with identity is defined to be the kernel ideal of the following induced map 
\begin{align*}
\Index{p}{B}{G}{A} & := \ker \big((\id\times_G p)^* \colon H^*(\E G \times_G B;A) \longrightarrow H^* (\E G \times_G X;A)  \big)  \\
&\ = \ker \big( (\id\times_G p)^* \colon H_G^*(B;A) \longrightarrow H_G^* (X;A)  \big).
\end{align*}
Here $H^*(\cdot)$ denotes the \v{C}ech cohomology while $H^*_G(\cdot)$ stands for the equivariant cohomology defined as the \v{C}ech cohomology of the Borel construction.
The essential properties of the index are:
\begin{compactenum}[\rm (i)]
\item \emph{Monotonicity:} If $p \colon X \longrightarrow B$ and $q \colon Y \longrightarrow B$ are continuous $G$-equivariant maps, and $f \colon X \longrightarrow Y$ is a continuous $G$-equivariant map with the property that $p = q \circ f$, then 
\[ 
\Index{p}{B}{G}{A} \supseteq \Index{q}{B}{G}{A}.  
\]	
\item \emph{Additivity:} If $(X_1 \cup X_2, X_1, X_2)$ is an excisive triple of $G$-spaces and $p \colon X_1 \cup X_2 \longrightarrow B $ is a continuous $G$-equivariant map, then
\[ 
\Index{p|_{X_1}}{B}{G}{A} \cdot \Index{p|_{X_2}}{B}{G}{A}  \subseteq \Index{p}{B}{G}{A}.
 \]
\item \emph{General Borsuk-Ulam-Bourgin-Yang theorem:} Let $p \colon X \longrightarrow B$ and $q \colon Y \longrightarrow B$ be continuous $G$-equivariant maps, and let $f \colon X \longrightarrow Y$ be a continuous $G$-equivariant map such that $p = q \circ f$. If $Z$ is a $G$-invariant subspace of $Y$, then 
\[ 
\Index{p|_{f^{-1}(Z)}}{B}{G}{A} \cdot \Index{q|_{Y \setminus Z}}{B}{G}{A} \subseteq \Index{p}{B}{G}{A}. 
\]

\end{compactenum} 
In the case when the space $B$ is a point and $p \colon X \longrightarrow B$ is, consequently, just the constant map, we recover the original definition of the ideal-valued index of the $G$-space $X$.
For that reason we simplify the notation and write $\Index{p}{B}{G}{A}=\Index{X}{\pt}{G}{A}=\ind_G(X;A)$. 
In this notation, the next property of the index can be formulated as follows: 

\medskip
\begin{compactenum}[\rm (iv)]
\item If $X$ is a $G$-space and $p \colon B \times X \longrightarrow B$ is the projection on the first factor, then 
\[ \Index{p}{B}{G}{A}= \Index{X}{\pt}{G}{A} \otimes H^*(B;A). \]
	
\end{compactenum}

\subsection{The Fadell--Husseini index of the test space}
\label{subsec : The Fadell--Husseini index of the test space}

In this section we compute the Fadell--Husseini index of the following projection onto the second factor
\[
\pi_2 \colon S(U_k'\oplus (U_k)^{\oplus j-1})\times \Gr{\ell}{d}\longrightarrow \Gr{\ell}{d}
\]
with respect to the action of the subgroup $\Z_2^k\subseteq\Sympm_k$ and with coefficients in the field with two elements $\F_2$.
That is: 
\begin{multline*}
\Index{\pi_2}{\Gr{\ell}{d}}{\Z_2^k}{\F_2}:=  \\	
\ker
\Big(
 (\id \times_{\Z_2^k}\pi_2)^* \colon H^*\big(\E\Z_2^k\times_{\Z_2^k} \Gr{\ell}{d};\F_2\big) \longrightarrow \\H^*\big(\E\Z_2^k\times_{\Z_2^k}	(S(U_k'\oplus (U_k)^{\oplus j-1}\times \Gr{\ell}{d});\F_2\big).
 \Big)
\end{multline*}
Since $\pi_2$ is a trivial fibre bundle and the $\Z_2^k$-action on $\Gr{\ell}{d}$ is trivial we see that induced map 
\[
\id \times_{\Z_2^k}\pi_2\colon \E\Z_2^k\times_{\Z_2^k}\big(	S(U_k'\oplus (U_k)^{\oplus j-1})\times \Gr{\ell}{d}\big)\longrightarrow 
\E\Z_2^k\times_{\Z_2^k} \Gr{\ell}{d}
\]
can be transformed into the following product map
\[
	\id \times_{\Z_2^k}\pi_2=u\times\id \colon \big(\E\Z_2^k\times_{\Z_2^k}S(U_k'\oplus (U_k)^{\oplus j-1}) \big)\times \Gr{\ell}{d}\longrightarrow \B\Z_2^k\times \Gr{\ell}{d}.
\]
Here, the map $u\colon \E\Z_2^k\times_{\Z_2^k}S(U_k'\oplus (U_k)^{\oplus j-1})\longrightarrow\B\Z_2^k$ is induced, via the Borel construction, by the constant $G$-equivariant map $S(U_k'\oplus (U_k)^{\oplus j-1})\longrightarrow\pt$.
In particular,
\begin{multline*}
\Index{S(U_k'\oplus (U_k)^{\oplus j-1})}{\pt}{\Z_2^k}{\F_2}=\\
\ker
\big(
u^*\colon H^*(\B\Z_2^k;\F_2)\longrightarrow H^*(\E\Z_2^k\times_{\Z_2^k} S(U_k'\oplus (U_k)^{\oplus j-1});\F_2)
\big).
\end{multline*}
Consequently, the induced map in cohomology $(\id \times_{\Z_2^k}\pi_2)^*=(u\times\id)^*$, after an application of the K\"unneth formula for field coefficients\cite[Thm.\,VI.3.2]{Bredon-1997}, becomes the tensor product homomorphism
\begin{multline*}
u^*\otimes\id\colon H^*(\B\Z_2^k;\F_2)\otimes H^*(\Gr{\ell}{d};\F_2)\longrightarrow  \\
 H^*(\E\Z_2^k\times_{\Z_2^k} S(U_k'\oplus (U_k)^{\oplus j-1});\F_2)\otimes H^*(\Gr{\ell}{d};\F_2).
\end{multline*}
Thus,
\begin{multline*}
\Index{\pi_2}{\Gr{\ell}{d}}{\Z_2^k}{\F_2}=\ker ((\id \times_{\Z_2^k}\pi_2)^*)=\ker (u^*\otimes\id)
=\\
\Index{S(U_k'\oplus (U_k)^{\oplus j-1})}{\pt}{\Z_2^k}{\F_2}\otimes  H^*(\Gr{\ell}{d};\F_2).
\end{multline*}
Now, if we use \cite[Prop.\,12]{Blagojevic-Ziegler-2011} we conclude that
\begin{multline}\label{eq : index of the test space}
\Index{\pi_2}{\Gr{\ell}{d}}{\Z_2^k}{\F_2}=\\
 \Big\langle 
\frac{1}{x_1\cdots x_k}\prod_{(\alpha_{1},\dots, \alpha_{k})\in\F_2^k{\setminus}\{ (0,\ldots,0) \}} (\alpha_{1}x_1 + \cdots + \alpha_{k}x_k)^{j}
  \Big\rangle \otimes H^*(\Gr{\ell}{d};\F_2).	
\end{multline}
Here we use the classical fact that the cohomology ring of $\B\Z_2^k$ can be presented as the polynomial ring:
\[
H^*(\B\Z_2^k;\F_2)\cong\F_2[x_1,\dots,x_k],
\] 
where all generators are of degree $1$, that is $\deg(x_1)=\dots=\deg(x_k)=1$, consult for example \cite[Thm.\,II.4.4]{Adem-Milgram-2004}.

\medskip 
For the cohomology of the real Grassmann manifold recall the classical result of Borel \cite[p. 190]{Borel-1953} which gives a presentation of the cohomology with $\F_2$ coefficients in the form of the truncated polynomial ring:
\[ 
H^*(\Gr{\ell}{d};\F_2) \cong \F_2[ w_1, \dots ,w_{\ell}, \overline{w}_1, \dots , \overline{w}_{d-\ell} ] / I_{d,\ell},
\]
where $\deg( w_i) = i$, $\deg( \overline{w}_j) = j$ for $1\leq i\leq \ell$, $1\leq j\leq d-\ell$, and $I_{d,\ell}$ is the ideal generated by the $d$ relations of the equality
\[ 
(1 + w_1 + \dots + w_{\ell}) (1 + \overline{w}_1 + \dots + \overline{w}_{d-\ell}) = 1. 
\]
In other word, the ideal $I_{d,\ell}$ is generated by the polynomials:
\[
\sum_{j=\max\{0,s+\ell-d\}}^{\min\{s,\ell\}}w_j\cdot \overline{w}_{s-j}, \qquad 1\leq s\leq d.
\]
Here the generator $w_i$, for $ 1 \leq i \leq \ell $, can be identified with the Stiefel--Whitney classes of the canonical bundle $\gamma_{\ell}^d$ while the remaining generators $\overline{w}_i$, for $ 1 \leq i \leq d-\ell $, can be identified with the dual Stiefel--Whitney classes of $\gamma_{\ell}^d$.
 
 \medskip

\subsection{The Fadell--Husseini index of the configuration space}
\label{subsec : The Fadell--Husseini index of the configuration space}

In Section \ref{subsec : Configuration space} we have defined the configuration space as the total space of the pull-back bundle
\[ 
\xymatrixcolsep{1in}	
\xymatrix{
\CC(d,\ell,k)=E\bigC{\Delta^*_k( (S\xi)^k)} \ar[r] \ar[d] & E(S\xi)^k   \ar[d]^{ q_k  }  \\
       \Gr{\ell}{d} \ar[r]^{\Delta_k} &  \Gr{\ell}{d}^k.
  }
\]  
where $\Delta_k$ is the diagonal embedding.
More precisely, 
\begin{align*}
\CC(d,\ell,k) &= 	E\bigC{\Delta^*_k( (S\xi)^k)}\\
&= \{ (L ; v_1, \ldots, v_k) \mid L \in \Gr{\ell}{d}, v_i \in L, \;  \|v_1\|=\cdots=\|v_k\| =1 \}.
\end{align*}
In this section we determine the Fadell--Husseini index of the map
\[
p_k \colon\CC(d,\ell,k)\longrightarrow \Gr{\ell}{d}
\]
with respect to the action of the subgroup $\Z_2^k\subseteq\Sympm_k$ and with coefficients in the field with two elements $\F_2$.
In other words, we describe:  
\begin{multline*}
\Index{p_k}{\Gr{\ell}{d}}{\Z_2^k}{\F_2}:=  \\	
\ker
\Big(
 (\id \times_{\Z_2^k}p_k)^* \colon H^*\big(\E\Z_2^k\times_{\Z_2^k} \Gr{\ell}{d};\F_2\big) \longrightarrow \\H^*\big(\E\Z_2^k\times_{\Z_2^k}	\CC(d,\ell,k);\F_2\big)
 \Big).
\end{multline*}
The computation of 
$
\Index{p_k}{\Gr{\ell}{d}}{\Z_2^k}{\F_2}\subseteq H^*\big(\E\Z_2^k\times_{\Z_2^k} \Gr{\ell}{d};\F_2\big)
$
is done in two steps. 
First, we describe the index 
\[
\Index{q_k}{\Gr{\ell}{d}^k}{\Z_2^k}{\F_2}\subseteq H^*\big(\E\Z_2^k\times_{\Z_2^k} \Gr{\ell}{d}^k;\F_2\big),
\] 
and then show that 
\[
\Index{p_k}{\Gr{\ell}{d}}{\Z_2^k}{\F_2}=(\Delta_k)^*\big(\Index{q_k}{\Gr{\ell}{d}^k}{\Z_2^k}{\F_2}\big).
\]

\subsubsection{Index of $q_k$}
\label{subsubsec : Index of q_k}

The Fadell--Husseini index of the map
$
q_k\colon  E(S\xi)^k\longrightarrow \Gr{\ell}{d}^k
$ 
is, by definition, the kernel: 
\begin{multline*}
\Index{q_k}{\Gr{\ell}{d}}{\Z_2^k}{\F_2}: = \\
\ker
\Big(
(\id \times_{\Z_2^k}q_k)^* \colon H^*\big(\E\Z_2^k\times_{\Z_2^k} \Gr{\ell}{d}^k;\F_2\big) \longrightarrow \\ H^*\big(\E\Z_2^k\times_{\Z_2^k}	 E(S\xi)^k;\F_2\big)
 \Big).
\end{multline*}
In order to identify the index $\Index{q_k}{\Gr{\ell}{d}}{\Z_2^k}{\F_2}$ we consider the fibre bundle
\[
\xymatrix{
(S^{\ell-1})^k\ar[r]& \E\Z_2^k\times_{\Z_2^k} E( S\xi)^k \ar[r]& \E\Z_2^k\times_{\Z_2^k} \Gr{\ell}{d}^k= \B\Z_2^k \times \Gr{\ell}{d}^k.
}
\]
The associated Serre spectral sequence has $E_2$-term given by 
\[
	E_2^{i,j} =  H^{i}( \B\Z_2^k \times \Gr{\ell}{d}^k ;\mathcal{H}^{j}( (S^{\ell-1})^k ; \F_2) ),
\]
where the local coefficient system is determined by the action of the fundamental group of the base space. 
Since the fundamental group
\[
	\pi_1( \B\Z_2^k \times \Gr{\ell}{d}^k )  \cong  \pi_1( B\Z_2^k ) \times \pi_1( \Gr{\ell}{d} )^k 
		    										    	\cong  \Z_2^k \times  \Z_2^{\oplus k}
\]
acts trivially on the cohomology of the fibre ${H}^{j}( (S^{\ell-1})^k ; \F_2)$, the $E_2$-term of the spectral sequence simplifies and becomes 
\begin{equation}
\label{Sseq1}	
E_2^{i,j}\cong H^{i}(\Z_2^k \times \Gr{\ell}{d}^k ; \F_2 ) \otimes  H^{j}( ( S^{\ell-1} )^k ; \F_2 ).
\end{equation}
In addition, all the differentials of the spectral sequence satisfy the Leibniz rule.

\medskip
Let us denote the cohomology of the fibre $( S^{\ell-1} )^k$ as follows 
\begin{multline*}
H^*( (S^{\ell-1})^k ;\F_2 )\cong H^*( S^{\ell-1} ;\F_2 ) ^{\otimes k} \cong \F_2[y_1]/(y_1^2) \otimes \cdots \otimes \F_2[y_k]/(y_k^2) \cong \\\F_2[y_1, \cdots , y_k]/(y_1^2, \cdots , y_k^2)	
\end{multline*}
 where $\deg(y_1)=\dots=\deg(y_k) = \ell-1$.

\begin{figure}[h]
	\centering
	\includegraphics[scale=.35]{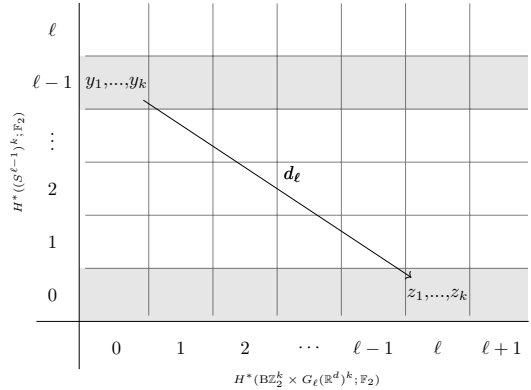}
	\caption{The $E_{\ell}$-term of the spectral sequence \eqref{Sseq1}.}
	\label{Figure: sseq 1}
\end{figure}

\medskip
Now we determine the first possible non-trivial differential $d_{\ell}$ by finding its values on the generators of the cohomology ring of the fibre, these are $z_1=d_{\ell}(y_1), \dots , z_k=d_{\ell}(y_k)$.
For that, consider the morphism of bundles 
\[ \xymatrixcolsep{1in}
	\xymatrix{
	E( S\xi)^k   \ar[r]\ar[d]^{q_k} & E( S\xi ) \ar[d] \\
	\Gr{\ell}{d}^k  \ar[r]^{g_r} & \Gr{\ell}{d} 
}\]
induced by the projection $g_r\colon \Gr{\ell}{d}^k\longrightarrow\Gr{\ell}{d}$ of the $r$th factor,  for $1 \leq r \leq k$. 
This morphism induces yet another morphism of the bundles
\[ \xymatrixcolsep{1in}
	\xymatrix{
	 \E\Z_2^k\times_{\Z_2^k}E( S\xi)^k  \ar[r]\ar[d] & \E\Z_2^k\times_{\Z_2^k}E( S\xi) \ar[d] \\
	\B\Z_2^k \times \Gr{\ell}{d}^k \ar[r]^{\id \times g_r} & \B\Z_2^k \times \Gr{\ell}{d},
}\label{MSseq1}\]
because it respects the action of $\Z_2^k$. 
The new morphism between Borel constructions, in turn, induces a morphism of the corresponding Serre spectral sequences which on the level of fibres, i.e., the zero column, is a monomorphism. 
Furthermore, it is also a monomorphism on the zero row of the $E_2$ and consequently $E_{\ell}$-term.
Thus,
\[ z_{r} = (\id \times g_r)^*(z),  \]
and so we turn our attention to the Serre spectral sequence associated to the fibre bundle 
\[ 
\xymatrix{
S^{\ell-1}\ar[r] & \E\Z_2^k\times_{\Z_2^k} E( S\xi )  \ar[r] & \B\Z_2^k\times \Gr{\ell}{d} .
}
\]
In particular, we want to determine $z=d_{\ell}(y)$, where $y \in H^*(  S^{\ell-1} ;\F_2 )$ is the generator.
For an illustration of this morphism of spectral sequences see Figure \ref{fig:morphism}.

\begin{figure}[H]
	\centering
	\includegraphics[scale=.60]{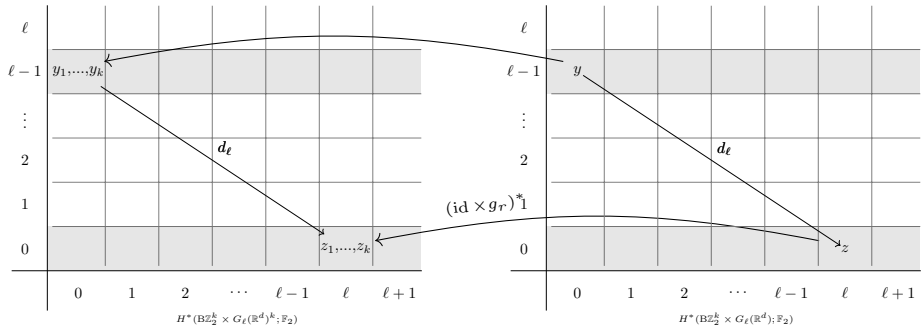}
	\caption{ Morphism of spectral sequences induced by the projection $g_r$.}
	\label{fig:morphism}
\end{figure}

\medskip
Now consider the inclusion $\Z_2 \xrightarrow{ i_r } \Z_2^k$ into the $r$th summand. 
Then there is an induced map between corresponding classifying spaces 
\[ 
\B\Z_2 \xrightarrow{ B(i_r) } \B\Z_2^k,
\]
which further on induces a map between corresponding Borel construction bundles 
\[ \xymatrixcolsep{1in}
	\xymatrix{
	\E\Z_2^k\times_{\Z_2^k} E(S\xi) \ar[d] & \E\Z_2\times_{\Z_2} E(S\xi)   \ar[l]\ar[d] \\
	\B\Z_2^k \times \Gr{\ell}{d}  & \B\Z_2 \times \Gr{\ell}{d} \ar[l]_{\B(i_r) \times \id} .
}\]
This morphism  of bundles induces  a morphism of the corresponding Serre spectral sequences which on the zero column of the $E_2$-term is an isomorphism; for an illustration see Figure \ref{fig:morphism2}.
From the classical work of Albrecht Dold \cite[Sec.\,3 and Sec.\,4]{Dold-1988}, applied to the Serre spectral sequence associated with the fibre bundle
\[
\xymatrix
{
S^{\ell-1}\ar[r] &  \E\Z_2\times_{\Z_2} E(S\xi) \ar[r] & \B\Z_2 \times \Gr{\ell}{d}
,}
\]
we have that
\begin{multline*}
	 \overline{z}: = d_{\ell}(\overline{y}) = 
	 \sum_{j = 0}^{\ell}  x^{j} \otimes w_{\ell-j}(\xi) \\
	 \in H^*(\B\Z_2 \times \Gr{\ell}{d} ;\F_2) \cong H^*(\B\Z_2 ;\F_2) \otimes H^*( \Gr{\ell}{d} ;\F_2),
\end{multline*}
where 
\begin{compactitem}[\rm  --]
\item $\overline{y}\in H^{\ell-1}(S^{\ell-1};\F_2)\cong E_2^{0,\ell-1}\cong E_{\ell}^{0,\ell-1}\cong\F_2$ is the generator,
\item $\overline{z}\in  H^{\ell}(\B\Z_2 \times \Gr{\ell}{d} ;\F_2)\cong \bigoplus_{a=0}^{\ell}H^{a}(\B\Z_2 ;\F_2) \otimes H^{\ell-a}( \Gr{\ell}{d} ;\F_2)$,
\item$H^*(\B\Z_2;\F_2) = \F_2[ x ]$, with $\deg(x)=1$, and 
\item  $w_{0}(\xi),\dots, w_{\ell}(\xi)$ are the Stiefel--Whitney classes of the tautological vector bundle $\gamma_{\ell}^{d}$.
\end{compactitem}

\noindent
Consequently,
\[ 
z = \sum_{j = 0}^{\ell}  x_{r}^{j} \otimes w_{\ell-j}(\xi) \in  H^*(\B\Z_2^k ;\F_2) \otimes H^*( \Gr{\ell}{d} ;\F_2).
\]
Recall that we have already fixed the notation $H^*(\B\Z_2^k;\F_2) = \F_2[ x_1, \cdots , x_k ]$ where $\deg(x_1)=\cdots=\deg(x_k)=1$. 
Furthermore, for  $1\leq r\leq k$ we get that
\begin{multline*}
z_{r} = \sum_{j = 0}^{\ell}  x_{r}^{j} \otimes (1 \otimes \cdots \otimes  w_{\ell-j}(\xi) \otimes \cdots \otimes 1)	\ 
\in \  H^*(\B\Z_2^k ;\F_2 ) \otimes H^*( \Gr{\ell}{d} ;\F_2)^{\otimes k}.
\end{multline*}
If, for $0\leq i\leq \ell$ and $1\leq r\leq k$, we set
$
w_{i,r} := 1 \otimes \cdots \otimes  w_{i}(\xi) \otimes \cdots \otimes 1
$, then we can rewrite
\[ 
z_{r} = \sum_{j = 0}^{\ell}  x_{r}^{j} \otimes w_{\ell-j,r}. \]
This means that
\begin{equation}
	\label{eq : index of q_k}
\Index{q_k}{\Gr{\ell}{d}^k}{\Z_2^k}{\F_2} = \langle z_1, \ldots , z_k \rangle =  \Big\langle \sum_{j = 0}^{\ell}  x_{r}^{j} \otimes w_{\ell-j,r} 	\: : \:	1 \leq r \leq k \Big\rangle.
\end{equation}

\begin{figure}[H]
	\centering
	\includegraphics[scale=.62]{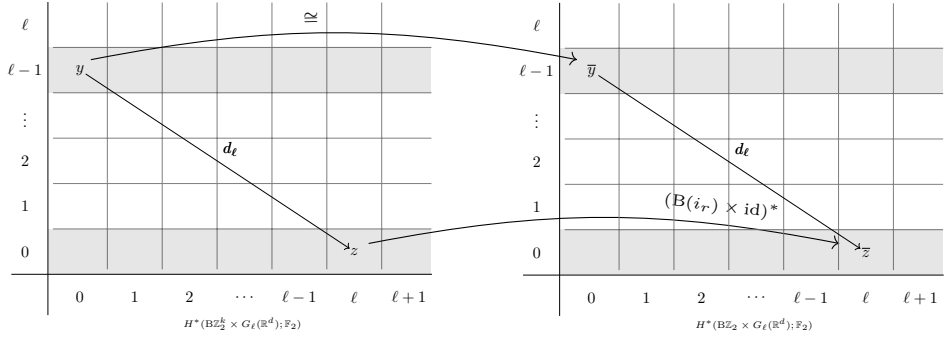}
	\caption{Morphism between spectral sequences induced by $\B(i_r)$.}
	\label{fig:morphism2}
\end{figure}

\subsubsection{Index of $p_k$}
\label{subsubsec : Index of p_k}
In this part we finally retrieve the Fadell--Husseini index of the map
$
p_k \colon\CC(d,\ell,k)\longrightarrow \Gr{\ell}{d}
$
with respect to the action of $\Z_2^k$ and with coefficients in  $\F_2$.
In other words, we compute:  
\begin{multline*}
\Index{p_k}{\Gr{\ell}{d}}{\Z_2^k}{\F_2}:=  \\	
\ker
\Big(
 (\id \times_{\Z_2^k}p_k)^* \colon H^*\big(\E\Z_2^k\times_{\Z_2^k} \Gr{\ell}{d};\F_2\big) \longrightarrow \\H^*\big(\E\Z_2^k\times_{\Z_2^k}	\CC(d,\ell,k);\F_2\big)
 \Big).
\end{multline*}

\medskip

The pullback diagram  
\[ 
\xymatrixcolsep{1in}	
\xymatrix{
\CC(d,\ell,k)=E\bigC{\Delta^*_k( (S\xi)^k)} \ar[r] \ar[d]^{ p_k  } & E(S\xi)^k   \ar[d]^{ q_k  }  \\
       \Gr{\ell}{d} \ar[r]^{\Delta_k} &  \Gr{\ell}{d}^k,
       }	
\] 
after applying the Borel construction, yields the following pullback diagram:
\[ 
\xymatrixcolsep{1in}	
\xymatrix{
\E\Z_2^k\times_{\Z_2^k}\CC(d,\ell,k)=\E\Z_2^k\times_{\Z_2^k}E\bigC{\Delta^*_k( (S\xi)^k)} \ar[r] \ar[d]^{ \id\times_{\Z_2^k}p_k  } & \E\Z_2^k\times_{\Z_2^k}E(S\xi)^k   \ar[d]^{ \id\times_{\Z_2^k}q_k  }  \\
    \B\Z_2^k\times   \Gr{\ell}{d} \ar[r]^{\Delta_k} &   \B\Z_2^k\times \Gr{\ell}{d}^k .
       }	
\] 
Furthermore, this morphism between the fibre bundles
\[
\xymatrix{
(S^{\ell-1})^k\ar[r]&\E\Z_2^k\times_{\Z_2^k}\CC(d,\ell,k)\ar[r] &  \B\Z_2^k\times   \Gr{\ell}{d}
}
\]
and
\[
\xymatrix{
(S^{\ell-1})^k\ar[r] & \E\Z_2^k\times_{\Z_2^k}E(S\xi)^k\ar[r] &  \B\Z_2^k\times   \Gr{\ell}{d}
}
\]
induces a morphism between their Serre spectral sequences.
Since the fibres of the bundles are homeomorphic, and the (non-trivial) fundamental groups of both base spaces act trivially on the cohomology of the fibres, the $E_2$-terms of the  spectral sequences are as follows:
\[
E_2^{i,j}( \id \times_{\Z_2^k} p_k ) \cong  H^{i}( \B\Z_2^k \times \Gr{\ell}{d}; \F_2 ) \otimes  H^{j}( ( S^{\ell-1} )^k ; \F_2 ) 
\]
and
\[
E_2^{i,j}( \id \times_{\Z_2^k} q_k ) \cong  H^{i}( B\Z_2^k\times \Gr{\ell}{d}^k ; \F_2 ) \otimes  H^{j}( ( S^{\ell-1} )^k ; \F_2 ) .
\]
Having in mind that the induced morphism of spectral sequences is an isomorphism on the $0$-column of the $E_2$-term, and that the differentials of both sequences satisfy the Leibniz rule, we get 
\begin{eqnarray*}
	\Index{p_k}{\Gr{\ell}{d}}{\Z_2^k}{\F_2}  & = & (\id \times \Delta_k)^*\big( \Index{q_k}{\Gr{\ell}{d}^k}{\Z_2^k}{\F_2} \big)\\
	& = & \Big\langle \sum_{s = 0}^{\ell}   (\id \times \Delta_k)^*( x_{r}^{s} \otimes w_{\ell-s,r} ) \: : \:	1 \leq r \leq k \Big\rangle \\
	& = & \Big\langle \sum_{s = 0}^{\ell}  x_{r}^{s} \otimes \Delta_k^*(w_{\ell-s,r}) 	\: : \:	1 \leq r \leq k \Big\rangle \\
	& = & \Big\langle \sum_{s = 0}^{\ell}  x_{r}^{s} \otimes w_{\ell-s} 	\: : \:	1 \leq r \leq k \Big\rangle.
\end{eqnarray*}
In summary, we have obtained that
\begin{equation}
\label{eq : index of the configuration space}
	\Index{p_k}{\Gr{\ell}{d}}{\Z_2^k}{\F_2} =	\Big\langle \sum_{s = 0}^{\ell}  x_{r}^{s} \otimes w_{\ell-s} 	\: : \:	1 \leq r \leq k \Big\rangle.
\end{equation}

\section{Proofs}
\label{sec : Proofs}

\subsection{Proof of Theorem \ref{theorem : Main}}
\label{Proof of Main Theorem}
Let $(d,\ell,k,j)$ be a $4$-tuple of natural numbers where $1\leq\ell\leq d-1$.
As introduced in \eqref{eq: definition of R}, we denote by $R_{d, \ell, k}$ the truncated polynomial ring
\[
\F_2[x_1,\dots,x_k, w_1, \dots ,w_{\ell}, \overline{w}_1, \dots , \overline{w}_{d-\ell}]/I_{d,\ell}
\]
where $\deg(x_1)=\dots=\deg(x_k)=1$, $\deg( w_i) = i$, $\deg( \overline{w}_j) = j$ for $1\leq i\leq \ell$, $1\leq j\leq d-\ell$, and $I_{d,\ell}$ is the ideal generated by the polynomials
\[
\sum_{j=\max\{0,s+\ell-d\}}^{\min\{s,\ell\}}w_j\cdot \overline{w}_{s-j}, \qquad 1\leq s\leq d.
\]
In addition, assume that 
\begin{multline}
\label{eq : assumption in main theorem}
\frac{1}{x_1\cdots x_k}\prod_{(\alpha_{1},\dots, \alpha_{k})\in\F_2^k{\setminus}\{ (0,\ldots,0) \}} (\alpha_{1}x_1 + \cdots + \alpha_{k}x_k)^{j}
 \\ \notin \
\Big\langle \sum_{s = 0}^{\ell}  x_{r}^{s} \otimes w_{\ell-s} 	\: : \:	1 \leq r \leq k \Big\rangle.	
\end{multline}
Now, we will prove that the $4$-tuple $(d,\ell,k,j)$ is mass assignment admissible.

\medskip
Let us assume the opposite, that $(d,\ell,k,j)$ is not mass assignment admissible.
From the configuration space / test map scheme, Theorem \ref{thm : cs-tm scheme}\eqref{thm : cs-tm scheme - a}, we get an  $\Sympm_k$-equivariant bundle map $\Delta^*_k( (S\xi)^k)	\longrightarrow	S\pi_2$.	
Here the bundles $\Delta^*_k( (S\xi)^k)$ and $\pi_2$ are defined in \eqref{bundle : configuration space} and \eqref{bundle : test map} respectively.
Thus, according to the monotonicity property of the Fadell--Husseini index, the following inclusion must hold:
\[
\Index{\pi_2}{\Gr{\ell}{d}}{\Z_2^k}{\F_2} \ \subseteq \ \Index{p_k}{\Gr{\ell}{d}}{\Z_2^k}{\F_2}.
\]
On the other hand, from \eqref{eq : index of the test space} and \eqref{eq : index of the configuration space}, we have that 
\begin{multline*}
\Index{\pi_2}{\Gr{\ell}{d}}{\Z_2^k}{\F_2}=\\
 \Big\langle \prod_{i=1}^{k} x_i^{j-1} \cdot \prod_{(\alpha_{1},\dots, \alpha_{k})\in\F_2^k{\setminus}\Gamma} (\alpha_{1}x_1 + \cdots + \alpha_{k}x_k)^{j} \Big\rangle \otimes H^*(\Gr{\ell}{d};\F_2),	
\end{multline*}
and
\[
	\Index{p_k}{\Gr{\ell}{d}}{\Z_2^k}{\F_2} =	\Big\langle \sum_{s = 0}^{\ell}  x_{r}^{s} \otimes w_{\ell-s} 	\: : \:	1 \leq r \leq k \Big\rangle.
\]
Consequently,
\begin{multline*}
 \Big\langle \prod_{i=1}^{k} x_i^{j-1} \cdot \prod_{(\alpha_{1},\dots, \alpha_{k})\in\F_2^k{\setminus}\Gamma} (\alpha_{1}x_1 + \cdots + \alpha_{k}x_k)^{j} \Big\rangle \otimes H^*(\Gr{\ell}{d};\F_2)  \ \subseteq \\
\Big\langle \sum_{s = 0}^{\ell}  x_{r}^{s} \otimes w_{\ell-s} 	\: : \:	1 \leq r \leq k \Big\rangle .
\end{multline*}
In particular,
\[
\prod_{i=1}^{k} x_i^{j-1} \cdot \prod_{(\alpha_{1},\dots, \alpha_{k})\in\F_2^k{\setminus}\Gamma} (\alpha_{1}x_1 + \cdots + \alpha_{k}x_k)^{j} \in 
\Big\langle \sum_{s = 0}^{\ell}  x_{r}^{s} \otimes w_{\ell-s} 	\: : \:	1 \leq r \leq k \Big\rangle.
\]
This is a contradiction with the assumption \eqref{eq : assumption in main theorem}, and we can conclude that the $4$-tuple $(d,\ell,k,j)$ is mass assignment admissible. \qed

\subsection{Proof of Corollary \ref{cor : ham-sandwich}}
\label{Proof of Corollary ham-sandwich}

In the case when $\ell=d$ the admissibility of $(d,d,1,d)$ is just the classical ham sandwich theorem.
Thus, the proof which we present is new for  $1\leq \ell \leq d-1$.

\medskip
Since we are in the situation where $k=1$ and $j=d$, then $e_{1,d}=x_1^{d-1}$, and the ideal $\II_{d,\ell,1}$ is the principal ideal generated by the polynomial
$
p:=\sum_{s = 0}^{\ell}  x_{1}^{s} \, w_{\ell-s}
$.
According to the Theorem \ref{theorem : Main} the $4$-tuple $( d, \ell , 1 , d )$ is mass assignment admissible if $e_{1,d}\notin \II_{d,\ell,1}$, or equivalently $p\nmid e_{1,d}$.
This means that 
\begin{equation}
	\label{non-divisibility}
	\sum_{s = 0}^{\ell}  x_{1}^{s} \, w_{\ell-s}\nmid x_1^{d-1}
\end{equation}
in the ring $R_{d,\ell,1}:=\F_2[x_1, w_1, \dots ,w_{\ell}, \overline{w}_1, \dots , \overline{w}_{d-\ell}]/I_{d,\ell}$.
Therefore, by verifying the claim of the relation \eqref{non-divisibility} we will complete the proof of the corollary.

\medskip
\noindent{\em Proof of \eqref{non-divisibility}}: 
From \cite[Proof of Prop.\,4.1 with  $\eta=\gamma_{\ell}^d$ and $\xi$ trivial]{Crabb2013} we have that 
\[
x_1^{d-1} =\Big(\sum_{s = 0}^{\ell}  x_{1}^{s} \, w_{\ell-s} \Big)\, q + \big(a_{\ell-1}x_1^{\ell-1}+ \cdots + a_1x_1 +a_0\big)
\]
where the coefficients $a_{\ell-1},\cdots,a_0$ of the remainder  can be expressed by:
\[
a_r = \overline{w}_{d-r-1}+w_1\overline{w}_{d-r-2}+ \dots + w_{\ell-r-1}\overline{w}_{d-\ell},
\]
for $0\leq r\leq \ell-1$.
Since $a_{\ell-1}=\overline{w}_{d-\ell}\neq 0$ the remainder does not vanish and therefore the relation \eqref{non-divisibility} holds.
This argument will be used once again in the proof of Lemma \ref{lem : intersection of ideal and subring} as relation \eqref{division of x^(d-1)}. \qed
 
\subsection{Proof of Theorem \ref{theorem : Main numerical}}
\label{Proof of Main numerical theorem}

Thus, let us assume that $d\geq 2$, $k\geq 1$, $j\geq 1$, $\ell\geq 1$ and $t\geq 0$, $r\geq 0$ are integers with $1\leq k\leq \ell\leq d$.
Set $j=2^t+r$ with $0\leq r\leq 2^{t}-1$, and in addition  assume, without loss of generality, that $d= 2^{t+k-1}+r$.

\medskip
In order to prove that the $4$-tuple $(d,\ell,k,j)$ is mass assignment admissible we show that 
\[
e_{k,j}\notin \II_{d,\ell,k}\subseteq R_{d,\ell,k}\cong (\F_2[w_1, \dots ,w_{\ell}, \overline{w}_1, \dots , \overline{w}_{d-\ell}]/I_{d,\ell})[x_1,\dots,x_k],
\]
which, according to Theorem \ref{theorem : Main}, is sufficient.

\medskip
Since the element $e_{k,j}$ is actually a polynomial with coefficients only in $\F_2$, that is $e_{k,j}\in \F_2[x_1,\dots,x_k]\subseteq R_{d,\ell,k}$, we first analyse the intersection $\F_2[x_1,\dots,x_k]\cap  \II_{d,\ell,k}$ of the subring $\F_2[x_1,\dots,x_k]$ of the ring $R_{d,\ell,k}$ and the ideal $\II_{d,\ell,k}$.
For simplicity, as in the previous section, we denote the generators of the ideal $\II_{d,\ell,k}$ by
\[
\beta_i:=\sum_{s = 0}^{\ell}  x_{i}^{s} \, w_{\ell-s} ,
\]
where $1 \leq i \leq k$. 
Hence,  $\II_{d,\ell,k}=\langle \beta_1,\dots,\beta_k \rangle$.

\begin{lemma}
\label{lem : intersection of ideal and subring}
$e_{k,j}\notin \langle \beta_1,\dots,\beta_k \rangle\subseteq R_{d,\ell,k}$ if and only if $e_{k,j}\notin\langle x_1^d,\dots,x_k^d\rangle\subseteq\F_2[x_1,\dots,x_k]$.	
\end{lemma}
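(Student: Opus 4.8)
The plan is to reduce the membership question in the quotient ring $R_{d,\ell,k}$ to one in the honest polynomial ring $\F_2[x_1,\dots,x_k]$ by showing that the intersection of the ideal $\langle\beta_1,\dots,\beta_k\rangle$ with the subring $\F_2[x_1,\dots,x_k]$ equals the ideal $\langle x_1^d,\dots,x_k^d\rangle$ of that subring. Since $e_{k,j}$ lies in $\F_2[x_1,\dots,x_k]$, the equivalence in the statement follows immediately once this equality of intersections is established. So the whole task is to prove
\[
\F_2[x_1,\dots,x_k]\cap\langle\beta_1,\dots,\beta_k\rangle=\langle x_1^d,\dots,x_k^d\rangle,
\]
where on the left the ideal is taken in $R_{d,\ell,k}$ and on the right in $\F_2[x_1,\dots,x_k]$.

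For the inclusion $\supseteq$, I would work one variable at a time: fix $r$ and recall from the proof of Corollary 1.4 (relation \eqref{division of x^(d-1)}, equivalently the division identity imported from \cite{Crabb2013}) that in $R_{d,\ell,k}$ one has $x_r^{d-1}=\beta_r\cdot q_r+(\text{remainder of degree}\le\ell-1)$. Multiplying by $x_r$ and using that $x_r\cdot w_s$-type terms can again be rewritten via $\beta_r$, one gets that $x_r^{d}$ lies in $\langle\beta_1,\dots,\beta_k\rangle$ — more carefully, one should show directly that $x_r^{d}\equiv 0$ modulo $\beta_r$ together with the Grassmann relations $I_{d,\ell}$, since $x_r^{d}$ has pure $x$-degree $d$ and the Borel relations kill the $w$-parts in the appropriate range. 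This is the computational heart of the $\supseteq$ direction and I expect it to be the main obstacle: one must track how the remainder terms $a_{\ell-1}x_r^{\ell-1}+\cdots+a_0$, whose coefficients are polynomials in the $w$'s and $\overline w$'s, interact with $I_{d,\ell}$ when one passes from exponent $d-1$ to exponent $d$, and confirm that no nonzero purely-$x$ residue survives.

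For the inclusion $\subseteq$, I would use a projection/evaluation argument. Set up a ring homomorphism $R_{d,\ell,k}\to\F_2[x_1,\dots,x_k]/\langle x_1^d,\dots,x_k^d\rangle$ that is the identity on the $x_i$ and sends each $w_s,\overline w_r$ somewhere compatible with the Grassmann relations $I_{d,\ell}$ — for instance, realize $\F_2[x_1,\dots,x_k]/\langle x_1^d,\dots,x_k^d\rangle$ geometrically as (a summand of) the cohomology of a product of projective spaces $(\RP{d-1})^k$ pulled back along a classifying map, under which $\gamma_\ell^d$ pulls back to a bundle whose Stiefel--Whitney classes realize the $x_r^{s}$-pattern; then each $\beta_r=\sum_s x_r^s w_{\ell-s}$ maps into $\langle x_r^d\rangle$ by the standard fact that $x_r^{d}=0$ in $H^*(\RP{d-1};\F_2)$ forces the relation. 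Hence the whole ideal $\langle\beta_1,\dots,\beta_k\rangle$ maps into $\langle x_1^d,\dots,x_k^d\rangle$, and since the homomorphism restricted to $\F_2[x_1,\dots,x_k]$ is just the quotient map, any element of $\F_2[x_1,\dots,x_k]\cap\langle\beta_1,\dots,\beta_k\rangle$ must already lie in $\langle x_1^d,\dots,x_k^d\rangle$. Combining the two inclusions yields the displayed equality and therefore the lemma. The delicate point, besides the degree bookkeeping in the $\supseteq$ part, is choosing the target ring and the images of the $w$'s so that the Grassmann relations $I_{d,\ell}$ are respected; the cohomological interpretation of $R_{d,\ell,k}$ as $H^*(B\Z_2^k\times G_\ell(\R^d);\F_2)$ recorded in the introduction makes this natural.
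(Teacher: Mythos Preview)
Your overall strategy---reducing the lemma to the equality
\[
\F_2[x_1,\dots,x_k]\cap\langle\beta_1,\dots,\beta_k\rangle=\langle x_1^d,\dots,x_k^d\rangle
\]
---is exactly right and matches the paper. But both directions, as you have sketched them, have problems.

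\medskip
\textbf{The inclusion $\supseteq$.} You propose to start from the division identity for $x_r^{d-1}$, multiply by $x_r$, and then chase remainder terms through $I_{d,\ell}$. This can be made to work, but it is far harder than necessary and you correctly sense that the bookkeeping is the ``main obstacle''. The paper avoids it entirely with a one-line identity: in $R_{d,\ell,k}$,
\[
x_i^d=\Big(\sum_{r=0}^{d-\ell}x_i^{r}\,\overline w_{d-\ell-r}\Big)\cdot\Big(\sum_{s=0}^{\ell}x_i^{s}\,w_{\ell-s}\Big)=\Big(\sum_{r=0}^{d-\ell}x_i^{r}\,\overline w_{d-\ell-r}\Big)\,\beta_i,
\]
because all cross terms of $x$-degree $<d$ are exactly the Borel relations generating $I_{d,\ell}$. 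This is the natural thing to try first (it is the Whitney product formula for $\gamma_\ell^d\oplus\gamma_\ell^{d\,\perp}$ written in one variable), and it makes this direction immediate.

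\medskip
\textbf{The inclusion $\subseteq$.} Here there is a genuine gap. You want a ring homomorphism $R_{d,\ell,k}\to\F_2[x_1,\dots,x_k]/\langle x_1^d,\dots,x_k^d\rangle$ which is the identity on the $x_i$ and kills each $\beta_r$. But think about what this forces on the images of the $w_s$: they must be polynomials $\phi(w_s)\in\F_2[x_1,\dots,x_k]$, independent of $r$, satisfying $\sum_s x_r^s\,\phi(w_{\ell-s})\equiv 0\pmod{x_1^d,\dots,x_k^d}$ for \emph{every} $r$. The obvious choice $\phi(w_s)=0$ gives $\beta_r\mapsto x_r^{\ell}\neq 0$ since $\ell<d$; any nonconstant choice breaks the symmetry in $r$. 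Geometrically, you are asking for a map $(\RP{d-1})^k\to G_\ell(\R^d)$ whose induced map on Stiefel--Whitney classes makes all $k$ polynomials $\beta_r$ vanish simultaneously, and there is no such map. So the retraction you want does not exist, and the ``delicate point'' you flag is not a mere technicality but an actual obstruction.

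\medskip
The paper proves $\subseteq$ by a completely different, purely algebraic route: it shows directly that the induced map $\F_2[x_1,\dots,x_k]/\langle x_1^d,\dots,x_k^d\rangle\to R_{d,\ell,k}/\langle\beta_1,\dots,\beta_k\rangle$ is injective. Using the division identity \eqref{division of x^(d-1)} one finds that in the quotient $x_i^{d-1}\equiv\overline w_{d-\ell}(x_i^{\ell-1}+w_1x_i^{\ell-2}+\cdots+w_{\ell-1})$. If some nonzero $p\in\F_2[x_1,\dots,x_k]$ of multidegree $\le d-1$ in each variable died in the quotient, then multiplying by suitable powers of the $x_i$ would force $x_1^{d-1}\cdots x_k^{d-1}$ to die as well; but its image is $\overline w_{d-\ell}^{\,k}\prod_i(x_i^{\ell-1}+\cdots+w_{\ell-1})$, which is nonzero because $\overline w_{d-\ell}^{\,\ell}\neq 0$ in $H^*(G_\ell(\R^d);\F_2)$ and $k\le\ell$. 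This ``top monomial'' argument is what replaces your nonexistent retraction.
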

\begin{proof}
First, we observe that $\langle x_1^d,\dots,x_k^d \rangle\subseteq \langle \beta_1,\dots,\beta_k \rangle$. Indeed, for every $1\leq i\leq k$ we have that
\[
x_i^d= \Big(\sum_{r = 0}^{d-\ell}  x_{i}^{r} \,  \overline{w}_{d-\ell-r}\Big)\cdot\Big(\sum_{s = 0}^{\ell}  x_{i}^{s} \, w_{\ell-s} \Big)
= \Big(\sum_{r = 0}^{d-\ell}  x_{i}^{r} \,  \overline{w}_{d-\ell-r}\Big)\,\beta_i\in  \langle \beta_1,\dots,\beta_k \rangle.
\]
Consequently, by contraposition, we get that
\[
e_{k,j}\notin \langle \beta_1,\dots,\beta_k \rangle \ \Longrightarrow \ e_{k,j}\notin\langle x_1^d,\dots,x_k^d\rangle. 
\]

\medskip
To prove the opposite implication, it suffices to show that the inclusion
\[
\F_2[x_1,\dots,x_k] \longrightarrow
(\F_2[w_1, \dots ,w_{\ell}, \overline{w}_1, \dots , \overline{w}_{d-\ell}]/I_{d,\ell})[x_1,\dots,x_k]
\]
induces a monomorphism
\begin{multline}
\label{eq -- injection}
\F_2[x_1,\dots,x_k]/ \langle x_1^d,\dots,x_k^d\rangle 
\longrightarrow \\
(\F_2[w_1, \dots ,w_{\ell}, \overline{w}_1, \dots , \overline{w}_{d-\ell}]/I_{d,\ell})[x_1,\dots,x_k]/\langle \beta_1,\dots,\beta_k \rangle	.
\end{multline}

\medskip
First, let us observe that in the ring $R_{d,\ell,k}$ the following identity holds
\begin{equation}
\label{division of x^(d-1)}	
x_i^{d-1} =\Big(\sum_{s = 0}^{\ell}  x_{i}^{s} \, w_{\ell-s} \Big)\, q + \big(a_{\ell-1}x_i^{\ell-1}+ \cdots + a_1x_i +a_0\big)
\end{equation} 
where $q\in R_{d,\ell,k}$ is a polynomial of degree $d-1-\ell$, and $1\leq i\leq k$.
The coefficients $a_{\ell-1},\cdots,a_0$ of the remainder in the previous equation can be explicitly computed, as shown in \cite[Proof of Prop.\,4.1 with  $\eta=\gamma_{\ell}^d$ and $\xi$ trivial]{Crabb2013}.
In particular,  for $0\leq r\leq \ell-1$:
\[
a_r = \overline{w}_{d-r-1}+w_1\overline{w}_{d-r-2}+ \dots + w_{\ell-r-1}\overline{w}_{d-\ell}.
\]
Since $\overline{w}_{s}=0$ for every $s\geq d-\ell+1$ we have that $a_r= w_{\ell-r-1}\overline{w}_{d-\ell}$ and specifically $a_{\ell-1}= \overline{w}_{d-\ell}\neq 0$.
Thus,
\begin{multline*}
	x_i^{d-1} + \langle \beta_1,\dots,\beta_k \rangle   
= \\
\overline{w}_{d-\ell} ( x_i^{\ell -1} + w_1x_i^{\ell-2} + \cdots + w_{\ell-2}x_i^{1} + w_{\ell -1} )
+ \langle \beta_1,\dots,\beta_k \rangle  
\neq  
\langle \beta_1,\dots,\beta_k \rangle  .
\end{multline*}

\medskip
Now, we show the injectivity of the map \eqref{eq -- injection}.
Denote by $I$ the kernel ideal of the map 
\begin{multline*}
\F_2[x_1,\dots,x_k] \longrightarrow \F_2[x_1,\dots,x_k]/ \langle x_1^d,\dots,x_k^d\rangle 
\longrightarrow \\
(\F_2[w_10, \dots ,w_{\ell}, \overline{w}_1, \dots , \overline{w}_{d-\ell}]/I_{d,\ell})[x_1,\dots,x_k]/\langle \beta_1,\dots,\beta_k \rangle.
\end{multline*}
In particular, $\langle x_1^d,\dots,x_k^d\rangle\subseteq I$.
Furthermore assume that 
\[
0\neq p=\sum_{(r_1,\dots,r_k)\in A\subseteq \{0,\dots, d-1\}^k} a_{r_1,\dots,r_k}\,x_1^{r_1}\cdots x_k^{r_k}\in I,
\]
for some index set $A\subseteq \{0,\dots, d-1\}^k$.
If $z=\min\{z : a_{r_1,\dots,r_{k-1},z}\neq 0\}$ then the polynomial $p\cdot x_k^{d-1-z}$ has all monomials with the exponent in $x_k$ at least $d-1$. 
Continuing in this way along the variables $x_{k-1}$, $x_{k-2}$, all the way down to $x_1$, we get that $x_1^{d-1}\cdots x_k^{d-1}\in I$.
In particular, this means that
\begin{multline*}
\langle \beta_1,\dots,\beta_k \rangle  =	x_1^{d-1}\cdots x_k^{d-1}+\langle \beta_1,\dots,\beta_k \rangle =\\
\overline{w}_{d-\ell}^k  \prod_{1 \leq i \leq k} \big(  x_i^{\ell -1} + w_1x_i^{\ell-2} + \cdots + w_{\ell -1} \big) 
+ \langle \beta_1,\dots,\beta_k \rangle  \\
\neq  \langle \beta_1,\dots,\beta_k \rangle,  
\end{multline*}
because $1\leq k\leq \ell$ and $\overline{w}_{d-\ell}^{\ell}\neq 0$ (see for example \cite{Hiller-1980}).
We have reached a contradiction with the assumption of the existence of a polynomial $p$ in the ideal $I$. 
Hence, the injectivity of the map \eqref{eq -- injection} is confirmed and the proof of the lemma is completed.
\end{proof}

\medskip
Using Lemma \ref{lem : intersection of ideal and subring} we complete the proof of Theorem \ref{theorem : Main numerical} by proving the following fact.

\begin{lemma}
\label{lem : Euler class not in the ideal}
$e_{k,j}\notin \II_{d,\ell,k}$.	
\end{lemma}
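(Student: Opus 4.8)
The plan is to reduce the statement, via Lemma~\ref{lem : intersection of ideal and subring}, to a purely combinatorial claim about monomials in $\F_2[x_1,\dots,x_k]$, and then to establish that claim by induction on $k$. By that lemma it suffices to show $e_{k,j}\notin\langle x_1^d,\dots,x_k^d\rangle$ inside $\F_2[x_1,\dots,x_k]$, where (as arranged in the proof of Theorem~\ref{theorem : Main numerical}) $d=2^{t+k-1}+r$. Now $\F_2[x_1,\dots,x_k]/\langle x_1^d,\dots,x_k^d\rangle$ has as an $\F_2$-basis the monomials $x_1^{a_1}\cdots x_k^{a_k}$ with $0\le a_i\le d-1$, so the assertion is equivalent to: the polynomial $e_{k,j}$ contains at least one monomial all of whose exponents are $\le d-1$. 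Writing
\[
e_{k,j}=(x_1\cdots x_k)^{j-1}\,Q_k^{\,j},\qquad
Q_k:=\!\!\prod_{\substack{\alpha\in\F_2^{k}\\ \#\{i\,:\,\alpha_i=1\}\ge 2}}\!\!(\alpha_1x_1+\cdots+\alpha_kx_k),
\]
and observing that the factor $(x_1\cdots x_k)^{j-1}$ adds exactly $j-1$ to every exponent, the task becomes: exhibit a monomial of $Q_k^{\,j}$ all of whose exponents are at most $d-j=2^{t}(2^{k-1}-1)$.

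Here I would work in characteristic $2$. With $j=2^t+r$ and the Frobenius identity $Q_k^{2^t}=Q_k(x_1^{2^t},\dots,x_k^{2^t})$ one has $Q_k^{\,j}=Q_k(x_1^{2^t},\dots,x_k^{2^t})\cdot Q_k^{\,r}$. Counting the weight-$\ge 2$ vectors $\alpha$ with $\alpha_i=1$ shows that the $x_i$-degree of $Q_k$ is at most $2^{k-1}-1$, so every monomial of $Q_k(x_1^{2^t},\dots,x_k^{2^t})$ already has all exponents $\le 2^{t}(2^{k-1}-1)$. In particular the case $r=0$ is settled at once, since $Q_k\ne 0$. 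For $1\le r\le 2^t-1$ one has to pick a monomial of $Q_k(x_1^{2^t},\dots,x_k^{2^t})$ and a monomial of $Q_k^{\,r}$ whose product still has all exponents $\le 2^{t}(2^{k-1}-1)$; the inequality $r\le 2^t-1$ is precisely the source of the needed room.

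I would do this by induction on $k$. The cases $k=1$ (where $Q_1=1$, so $e_{1,j}=x_1^{d-1}$) and $k=2$ (where $Q_2=x_1+x_2$ and the monomial $x_1^{2^t}x_2^{r}$ occurs in $(x_1+x_2)^{2^t+r}$ because $\binom{2^t+r}{2^t}$ is odd, by Lucas' theorem) are immediate. For the inductive step one splits off the last variable, factoring $Q_k=Q_{k-1}\cdot F$ with
\[
F=\!\!\prod_{0\ne\ell\in\langle x_1,\dots,x_{k-1}\rangle}\!\!(\ell+x_k);
\]
here $F$ is the additive polynomial $\prod_{v\in\F_2^{k-1}}(x_k+\ell_v)$ divided by its root $x_k$, hence as an $x_k$-polynomial it has degree $2^{k-1}-1$, leading coefficient $1$, and constant term the product $P_{k-1}$ of all nonzero linear forms in $x_1,\dots,x_{k-1}$. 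Expanding $Q_k^{\,j}=Q_{k-1}^{\,j}F^{\,j}$ and $F^{\,j}=F(x_1^{2^t},\dots,x_k^{2^t})\,F^{\,r}$, I would pair the monomial of $Q_{k-1}^{\,j}$ supplied by the inductive hypothesis (all exponents $\le 2^{t}(2^{k-2}-1)$) with suitably chosen low-$x_k$-degree terms of $F(x_1^{2^t},\dots,x_k^{2^t})$ and $F^{\,r}$: the additional room $2^{t+k-2}$ that opens in each of $x_1,\dots,x_{k-1}$ as the bound relaxes from $2^{t}(2^{k-2}-1)$ to $2^{t}(2^{k-1}-1)$ is exactly what accommodates the $F^{\,r}$-contribution, whose $x_i$-degrees are at most $r\,2^{k-2}<2^{t+k-2}$.

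The hardest part is this construction. One has to keep all $k$ exponents inside the box $[0,2^{t}(2^{k-1}-1)]^k$ while at the same time ensuring the chosen monomial of $Q_k^{\,j}$ does not cancel modulo $2$; this forces one to determine, for each relevant $x_k$-degree, exactly which product of lower-degree pieces of $F(x_1^{2^t},\dots,x_k^{2^t})$ and $F^{\,r}$ contributes it, and to invoke Lucas' theorem at each stage. Since the dimension $d=2^{t+k-1}+r$ is the (essentially sharp) Mani-Levitska--Vre\'cica--\v{Z}ivaljevi\'c bound, there is almost no slack in the exponent inequalities, so this bookkeeping has to be carried out with care; by contrast, the reductions in the first two paragraphs are routine.
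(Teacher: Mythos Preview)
Your reduction via Lemma~\ref{lem : intersection of ideal and subring} and the base cases $k=1,2$ are correct and coincide with the paper's set-up. The overall plan --- induction on $k$, splitting off the last variable through $Q_k=Q_{k-1}F$ --- is also the paper's. The gap is in the induction step itself.

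First, a slip: to keep the $x_i$-budget (for $i<k$) at $2^{t+k-2}$ you must take the \emph{leading} monomial $x_k^{2^t(2^{k-1}-1)}$ of $F(x_1^{2^t},\dots,x_k^{2^t})$, not a ``low-$x_k$-degree'' term; the constant term of $F^{2^t}$ already has $x_i$-degree $2^{t+k-2}$ and would use up the entire room before $F^{\,r}$ contributes anything.

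Second, and more seriously, the ``pairing'' strategy is not a proof over $\F_2$. Choosing a monomial $m$ of $Q_{k-1}^{\,j}$, a monomial of $F^{2^t}$ and a monomial of $F^{\,r}$ does not guarantee that their product occurs in $Q_k^{\,j}$: other triples can produce the same monomial and cancel it. Your inductive hypothesis gives information about $Q_{k-1}^{\,j}$, but what you actually need is a surviving monomial of the product $Q_{k-1}^{\,j}\cdot(\text{$x_k^{c}$-coefficient of }F^{\,j})$, and that product is a \emph{different} polynomial. The paper handles this as follows. Writing $j=2^{t_1}+\cdots+2^{t_a}$ in binary and expanding $\Delta_k^{\,j}$ (equivalently $F^{\,j}$) completely, it shows that the $x_k$-degree $E=\sum_b 2^{i_b+t_b}$ equals $d=2^{t+k-1}+r$ for a \emph{unique} choice of indices, namely $i_1=k-1$ and $i_2=\cdots=i_a=0$. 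Hence there is no cancellation at that degree, and the $x_k^{d-1}$-coefficient of $e_{k,j}$ is exactly $\tfrac{1}{x_1\cdots x_{k-1}}\Delta_{k-1}^{\,j+r}$. The induction hypothesis is then applied not with the same $j$ but with $j'=2^{t+1}+r\ge j+r$: since $e_{k-1,j'}\notin\langle x_1^{d},\dots,x_{k-1}^{d}\rangle$ and $\Delta_{k-1}^{\,j'-j-r}\cdot\tfrac{1}{x_1\cdots x_{k-1}}\Delta_{k-1}^{\,j+r}=e_{k-1,j'}$, the factor $\tfrac{1}{x_1\cdots x_{k-1}}\Delta_{k-1}^{\,j+r}$ is not in the ideal either. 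This last step --- using that ideals absorb multiplication, together with the shifted parameter $j'$ --- is precisely what replaces your monomial-pairing and makes the induction close; your outline, which keeps $j$ fixed and tries to combine monomials from separate factors, does not supply an argument for non-cancellation and, as written, cannot be completed.
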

\begin{proof}
According to Lemma \ref{lem : intersection of ideal and subring} it suffices to show that 
$
e_{k,j}\notin\langle x_1^d,\dots,x_k^d\rangle\subseteq\F_2[x_1,\dots,x_k]
$.

\medskip
First, we transform the polynomial $e_{k,j}$ in as follows:
\begin{align*}
e_{k,j} &=	
\frac{1}{x_1\cdots x_k} \prod_{(\alpha_{1},\dots, \alpha_{k})\in\F_2^k{\setminus}\{ (0,\ldots,0) \}} (\alpha_{1}x_1 + \cdots + \alpha_{k}x_k)^{j}\\
&=\frac{1}{x_1\cdots x_k}\Big( \prod_{(\alpha_{1},\dots, \alpha_{k})\in\F_2^k{\setminus}\{ (0,\ldots,0) \}} (\alpha_{1}x_1 + \cdots + \alpha_{k}x_k)\Big)^{j}\\
&=\frac{1}{x_1\cdots x_k}\cdot \Delta_k^j,
\end{align*}
where
\[
\Delta_k:= \prod_{(\alpha_{1},\dots, \alpha_{k})\in\F_2^k{\setminus}\{ (0,\ldots,0) \}} (\alpha_{1}x_1 + \cdots + \alpha_{k}x_k)
\]
is the Dickson polynomial of maximal degree.
Furthermore, the Dickson polynomial $\Delta_k$ can be presented as a polynomial in $x_k$ by:
\begin{align}\label{eq : delta}
\Delta_k &=	\prod_{(\alpha_{1},\dots, \alpha_{k})\in\F_2^k{\setminus}\{ (0,\ldots,0) \}} (\alpha_{1}x_1 + \cdots + \alpha_{k}x_k) \nonumber \\
&= \Delta_{k-1}x_k\prod_{(\alpha_{1},\dots, \alpha_{k})\in\F_2^{k-1}{\setminus}\{ (0,\ldots,0) \}} (\alpha_{1}x_1 + \cdots + \alpha_{k-1}x_{k-1}+x_k)\nonumber \\
&=\Delta_{k-1}x_k\Big(\sum_{i=0}^{k-1}D_{k-1,i}x_k^{2^i-1} \Big)
\end{align}
where $D_{k-1,0},\dots, D_{k-1,k-2}$ are Dickson polynomials in variables $x_1,\dots,x_{k-1}$.
In particular, $D_{k-1,0}=\Delta_{k-1}$ and $D_{k-1,k-1}=1$.
For more details on Dickson polynomials consult for example \cite{Wilkerson1983}.

\medskip
Now, we start the proof of the claim $e_{k,j}\notin\langle x_1^d,\dots,x_k^d\rangle$ using induction on $k$.
In the case $k=1$ we have that $\Delta_1=x_1$ and $d=2^t+r=j$.
Then our claim reduces to the obvious fact that $e_{i,j}=x_1^{j-1}\notin \langle x_1^j\rangle$.
Let us assume, as an induction hypothesis, that
\begin{equation}
	\label{eq : IHH}
	e_{k-1,j}\notin \langle x_1^{2^{t+k-2}+r},\dots,x_{k-1}^{2^{t+k-2}+r}\rangle.
\end{equation}

\medskip
For the induction step we represent the integer $j$ in its binary form as:
\[
j=2^{t_1}+2^{t_2}+\cdots +2^{t_a},
\]
where $a\geq 1$ and $t=t_1>t_2>\cdots > t_a\geq 0$.
In particular, $r=2^{t_2}+\cdots +2^{t_a}$.
Now from \eqref{eq : delta} it follows that
\begin{equation}\label{eq : delta-j}
\Delta_k^j = \Delta_{k-1}^jx_k^j\Big(\sum_{i=0}^{k-1}D_{k-1,i}\,x_k^{2^i-1}	\Big)^j= \Delta_{k-1}^jx_k^j \prod_{b=1}^a \Big(\sum_{i=0}^{k-1}D_{k-1,i}^{2^{t_b}}\,x_k^{(2^i-1)2^{t_b}}	\Big).
\end{equation}
A typical monomial in the expansion of the left hand side of \eqref{eq : delta-j} is of the form
\begin{align*}
m &=\Delta_{k-1}^jx_k^j \cdot D_{k-1,i_1}^{2^{t_1}}\,x_k^{(2^{i_1}-1)2^{t_1}}\cdot D_{k-1,i_2}^{2^{t_2}}\,x_k^{(2^{i_2}-1)2^{t_2}}\cdots D_{k-1,i_a}^{2^{t_a}}\,x_k^{(2^{i_a}-1)2^{t_a}}\\
&=\Delta_{k-1}^jD_{k-1,i_1}^{2^{t_1}}D_{k-1,i_2}^{2^{t_2}}\cdots D_{k-1,i_a}^{2^{t_a}}x_k^E,
\end{align*}
where $0\leq i_1,\dots, i_a\leq k-1$, and 
\[
E=j+(2^{i_1+t_1}-2^{t_1})+(2^{i_2+t_2}-2^{t_2})+\cdots+(2^{i_a+t_a}-2^{t_a})=2^{i_1+t_1}+\cdots+2^{i_a+t_a}.
\]
Observe that,
\[
E = 2^{i_1+t_1}+2^{i_2+t_2}+ \cdots + 2^{i_a+t_a} 
   = 2^{t_1+k-1} + 2^{t_2} + \cdots + 2^{t_a}
\]
if and only if
\[
2^{t_2}(2^{i_2}-1)+\cdots+2^{t_a}(2^{i_a}-1)=2^{t_1+i_1}(2^{k-1-i_1}-1)
\]
if and only if
\[
i_1=k-1 \quad\text{and}\quad i_2=\dots=i_a=0,
\]
because $t_1>t_2>\cdots > t_a\geq 0$. 
Thus, in the expansion of the polynomial $\Delta_k^j$ the (one and only) monomial of degree $2^{t_1+k-1}+r=2^{t+k-1}+r$ in the variable $x_k$ is of the form $\Delta_{k-1}^{j+r}\,x_k^{2^{t_1+k-1}+r}$.
In other words, in the expansion of the polynomial $e_{k,j}$ the (one and only) monomial of degree $2^{t_1+k-1}+r-1=2^{t+k-1}+r-1$ in the variable $x_k$ is of the form $\frac{1}{x_1\cdots x_{k-1}}\cdot \Delta_{k-1}^{j+r}\,x_k^{2^{t+k-1}+r-1}$.

\medskip\noindent
In the final step of the proof we use the induction hypothesis \eqref{eq : IHH} with $j'=2^{t+1}+r$ where $0\leq r\leq 2^{t}-1$, that is
\[
e_{k-1,j'}=\frac{1}{x_1\cdots x_{k-1}}\cdot \Delta_{k-1}^{j'}=\frac{1}{x_1\cdots x_{k-1}}\cdot \Delta_{k-1}^{2^{t+1}+r}\notin \langle x_1^{2^{t+k-1}+r},\dots,x_{k-1}^{2^{t+k-1}+r}\rangle.
\]
Since $j+r=2^t+2r\leq j'=2^{t+1}+r$ we conclude that 
\[
\frac{1}{x_1\cdots x_{k-1}}\cdot \Delta_{k-1}^{j+r}\notin \langle x_1^{2^{t+k-1}+r},\dots,x_{k-1}^{2^{t+k-1}+r}\rangle.
\]
 Hence, $e_{k,j}=\frac{1}{x_1\cdots x_k}\cdot \Delta_k^j$ has a (non-zero) monomial $x^{\alpha_1}_1\cdots x^{\alpha_{k-1}}_{k-1}x_k^{2^{t+k-1}+r-1}$ where $\alpha_i\leq  2^{t+k-1}+r-1$ for all $1\leq i\leq k-1$.
Therefore, $e_{k,j}\notin\langle x_1^d,\dots,x_k^d\rangle$, and the induction step is completed.

\medskip
This proof adds the missing argument in the proof of \cite[Thm.\,39]{ManiLevitska-Vrecica-Zivaljevic-2006} and corrects the final steps of the proof of \cite[Thm.\,3.2]{Blagojevic-Frick-Haase-Ziegler-2018}. 
\end{proof}

\medskip
The proof of Theorem \ref{theorem : Main numerical} is now finished.

\subsection*{Acknowledgements.}
We are grateful to Omar Antolin for a useful suggestion regarding the definition of the notion of mass assignment and to Pablo Sober\'on for pointing out that the lower bound on dimension from the classical case extends to mass assignment context. 
The authors would also like to thank Matija Blagojevi\'c for the considerable improvements to the manuscript.

\medskip

\end{document}